\title{A capacity approach to box and packing dimensions of projections of sets and exceptional directions}
\author{K.J. Falconer\\
\small{{\it Mathematical Institute,  
University of St~Andrews, North Haugh, St~Andrews,}} \\
\small{{\it Fife, KY16~9SS, Scotland }}\\
\small{{\sf kjf@st-andrews.ac.uk }}} 
\date{}
\def\bbbr{\mathbb{R}}
\def\rn{\mathbb{R}^n}
\newcommand\ubd{\overline{\mbox{\rm dim}}_{\rm B}} 
\newcommand\lbd{\underline{\mbox{\rm dim}}_{\rm B}} 
\newcommand\bdd{\mbox{\rm dim}_{\rm B}}
\newcommand\pkd{\mbox{\rm dim}_{\rm P}} 
\newcommand\hdd{\mbox{\rm dim}_{\rm H}} 
\newcommand{\be}{\begin{equation}} 
\newcommand{\ee}{\end{equation}} 
\newcommand\pve{\pi_V E}
 \newtheorem{theo}{Theorem}[section]
 \newtheorem{cor}[theo]{Corollary}
 \newtheorem{lem}[theo]{Lemma}
 \newtheorem{prop}[theo]{Proposition}
\begin{document}
\maketitle

\begin{abstract}
\noindent Dimension profiles were introduced in \cite{FH2, How} to give a formula for the box-counting and packing dimensions of the orthogonal projections of a set $E \subset \rn$ onto almost all $m$-dimensional subspaces. However, these definitions of dimension profiles are indirect and are hard to work with. Here we firstly give alternative definitions of dimension profiles in terms of capacities of $E$  with respect to certain kernels, which lead to the box-counting and packing dimensions of projections fairly easily, including estimates on the size of the exceptional sets of subspaces where the dimension of projection is smaller than the typical value. Secondly, we argue that with this approach projection results for different types of dimension may be thought of in a unified way. Thirdly, we use a Fourier transform method to obtain further inequalities on the size of the exceptional subspaces.  
 \end{abstract}

\section{Introduction and main results}
\setcounter{equation}{0}
\setcounter{theo}{0}

\subsection{Introduction}

The relationship between the Hausdorff dimension of a set $E \subset \rn$ and of its orthogonal projections $\pi_V(E)$ onto  subspaces $V\in G(n,m)$, where $G(n,m)$ is the Grassmanian of $m$-dimensional subspaces of $\rn$ and $\pi_V: \rn \to V$ denotes orthogonal projection, has been studied since the foundational work of Marstrand \cite{Mar} and Mattila \cite{Mat4}. They showed that for Borel $E \subset \rn$ 
\be\label{marmat}
\hdd \pi_V(E) \ = \ \min\{\hdd E, m\}  
\ee
for almost all $m$-dimensional subspaces $V$ (with respect to the natural invariant probability measure $\gamma_{n,m}$ on $G(n,m)$) where $\hdd$ denotes Hausdorff dimension. Kaufman \cite{Kau,KM} used capacities to prove and extend these results and this has become the standard approach for  such problems.
There are many generalisations, specialisations and consequences of these projection results, see \cite{FFJ,Mat5} for recent surveys.

It is natural to seek analogous projection results for other notions of dimension. However, examples show that the direct analogue of \eqref{marmat} is not valid for lower or upper box-counting (Minkowski) dimensions or packing dimension, though there are non-trivial lower bounds on the dimensions of the projections, see \cite{FH,FM,Jar}.  It was shown in \cite{FH2,How} that the box-counting and packing  dimensions of $\pi_V(E)$ for a Borel set $E$ are constant for almost all $V \in G(n,m)$ but this constant value, termed a  `dimension profile' of $E$, had a very indirect definition in terms of the supremum of dimension profiles of measures supported by $E$ which in turn are given by critical parameters for certain almost sure pointwise limits  \cite{FH2}. A later approach in \cite{How} defines box-counting dimension profiles in terms of weighted packings subject to constraints.  

I was never very happy with these definitions, which are artificial, indirect and awkward to use. To make the concept more attractive and useful, this paper presents an alternative and more natural way of defining box-counting and packing dimension profiles in terms of capacities with respect to certain kernels. Then using simple properties of equilibrium measures we can find 
the `typical' box or packing dimensions of $\pi_V(E)$, that is those that are realised for almost all $V \in G(n,m)$, as a dimension profile of $E$. With little more effort, we can also obtain some upper bounds for the dimension of the exceptional $V \in G(n,m)$ where the projection dimension is smaller than this typical value. Then, using Fourier transform methods, we will obtain new estimates on the dimension of the exceptional sets of $V \in G(n,m)$ for box and packing dimensions when, roughly speaking, the dimension of $E$ is greater than $m$.

Thus in \eqref{dimpro} we will define the  $s$-box dimension profile of $E\subset \mathbb{R}^n$ for $s> 0$ as
$$\bdd^s E\  =\   \lim_{r\to 0} \frac{\log  C_r^s(E)}{-\log r}, $$
 where $C_r^s(E)$ is the capacity of $E$ with respect to the continuous kernel \eqref{ker} (more precisely taking  lower and upper limits  will give the lower and upper dimension profiles). We will show in Section \ref{sec2.2}
 that if $s\geq n$ then $\bdd^s E$ is just the usual box-counting dimension of $E$.  On the other hand, in Section \ref{sec3.1}, we show that if $1 \leq m \leq n-1$, then $\bdd^m E$ equals the box-counting dimension of  $\pi_V(E)$ for almost all $V \in G(n,m)$. In this way, the dimension profile $\bdd^s E$ may be thought of as the dimension of $E$ when regarded from an $s$-dimensional viewpoint. Analogously,  $\hdd^s E= \min\{\hdd E, s\}$ might be interpreted as  the Hausdorff dimension profile for the Hausdorff dimension result \eqref{marmat}. By defining packing dimension profiles in terms of upper box dimension profiles in Section \ref{Secpac} we obtain similar results for the packing dimension of projections. In Section \ref{ineqs} we consider inequalities satisfied by the dimension profiles which help give a feel for the results.

Since their conception, dimension profiles have also become a key tool for investigating the packing and box dimensions of the images of sets under random processes, see for example \cite{Fa2,SX,Xi}.

\subsection{Main results on projections and exceptional directions}\label{mainres}

Given the definitions of the dimension profiles which will be formally defined in \eqref{dimpro}, the basic projection results are easily stated. Essentially, the $m$-dimension profiles of $E$ give the dimension of the projections of $E$ onto almost all $m$-dimensional subspaces. Theorem \ref{mainA} is the basic result on dimension of projections, and Theorems \ref{mainB} and \ref{mainC} concern the dimensions of the set of $V\in G(n,m)$ for which the dimensions of the projections onto $V$ are exceptionally small.
We include the well-known Hausdorff dimension projection results for comparison which are directly analogous to the conclusions for box and packing dimension if we define 
$$
\hdd^s E:= \min\{s, \hdd E\}
$$
to be the {\it Hausdorff dimension profile} of $E$.

\begin{theo}\label{mainA}
Let $E\subset \mathbb{R}^n$ be a non-empty Borel set (assumed to be bounded in $(ii)$ and $(iii)$). Then for  all $V\in G(n,m)$,
\vspace{-0.5cm} 
{\setstretch{1.4}
\begin{align*}
(i)\quad & \hdd \pi_V E \  \leq \  \hdd^m E\ \equiv \  \min\{m, \hdd E\},\\
(ii)\quad & \lbd \pi_V E \  \leq \   \lbd^m E,\\
(iii)\quad &\ubd \pi_V E \  \leq \   \ubd^m E,\\
(iv)\quad &\pkd \pi_V E \  \leq \   \pkd^m E,
\end{align*}
}
with equality in all of the above for $\gamma_{n,m}$-almost all $V\in G(n,m)$.
\end{theo}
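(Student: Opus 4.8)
The plan is to establish the four upper bounds for \emph{every} $V\in G(n,m)$ first, and then the matching reverse inequalities for $\gamma_{n,m}$-almost all $V$; since a countable intersection of full-measure sets again has full measure, it suffices to treat the four cases separately. We may assume $E$ is compact, as passing to the closure changes none of the box or packing dimensions and guarantees that the equilibrium measures used below exist. Part $(i)$ is classical: orthogonal projection is $1$-Lipschitz, so $\hdd \pve\le\hdd E$, while $\pve$ lies in the $m$-dimensional space $V$, giving $\hdd\pve\le m$; hence $\hdd\pve\le\min\{m,\hdd E\}=\hdd^m E$, with equality for a.a. $V$ being exactly Marstrand--Mattila \eqref{marmat}. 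So the substance is the box-counting statements $(ii),(iii)$, from which $(iv)$ will follow.

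For the upper bounds in $(ii),(iii)$ I would argue directly from \eqref{dimpro}. Write $N_r(F)$ for the least number of sets of diameter $r$ needed to cover $F$, and $\phi^m_r$ for the kernel of \eqref{ker}, comparable to $\min\{1,(r/|x|)^m\}$. Fix $V$, take an $r$-separated set $\{\pi_V x_1,\dots,\pi_V x_N\}\subset\pve$ with $N\asymp N_r(\pve)$, and feed the uniform measure $\mu=\tfrac1N\sum_i\delta_{x_i}$ into the capacity. Since projection is $1$-Lipschitz, $|x_i-x_j|\ge|\pi_V(x_i-x_j)|\ge r$ for $i\ne j$; grouping the $x_j$ by the annulus $\{z:|\pi_V z|\in[kr,(k+1)r)\}$, which can receive the projections of at most $\lesssim k^{m-1}$ of the $r$-separated points, and using $\phi^m_r(x_i-x_j)\le(r/|\pi_V(x_i-x_j)|)^m\le k^{-m}$, one gets $\sum_j\phi^m_r(x_i-x_j)\lesssim\sum_{k\le 1/r}k^{-1}\lesssim\log(1/r)$. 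Hence the energy of $\mu$ is $\lesssim\log(1/r)/N$ and $C^m_r(E)\gtrsim N/\log(1/r)\asymp N_r(\pve)/\log(1/r)$. The logarithmic factor is annihilated by $\log(\cdot)/(-\log r)$, so $\ubd\pve\le\ubd^m E$ and $\lbd\pve\le\lbd^m E$ for every $V$.

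For the reverse inequalities for almost all $V$ I would use equilibrium measures together with the integral-geometric identity
\[
\gamma_{n,m}\{V:|\pi_V x|\le r\}\ \asymp\ \min\{1,(r/|x|)^m\}\ \asymp\ \phi^m_r(x).
\]
Let $\mu_r$ be the equilibrium measure for $C^m_r(E)$, so its energy equals $1/C^m_r(E)$, and push it forward to $\nu_V=(\pi_V)_*\mu_r$. Integrating the tube-mass $J_V:=\int\nu_V(B(u,cr))\,d\nu_V(u)$ against $\gamma_{n,m}$ and applying the identity gives $\int J_V\,d\gamma_{n,m}(V)\asymp 1/C^m_r(E)$. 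A Cauchy--Schwarz estimate over an $r$-grid in $V$ yields $N_r(\pve)\gtrsim 1/J_V$, so Markov's inequality shows that for all but a $\gamma_{n,m}$-fraction $1/\lambda$ of directions, $N_r(\pve)\gtrsim C^m_r(E)/\lambda$. The main obstacle is upgrading this scale-by-scale, positive-measure-exceptional statement to one holding for almost all $V$ across all small scales simultaneously---delicate precisely because box dimension is a $\limsup$/$\liminf$ over $r$ rather than a supremum over measures as in the Hausdorff case. I would resolve this by restricting to the geometric sequence $r_k=2^{-k}$, taking $\lambda=k^2$ so that the exceptional measures are summable, and invoking Borel--Cantelli to get $N_{r_k}(\pve)\gtrsim C^m_{r_k}(E)/k^2$ for all large $k$ and a.a. $V$. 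The passage to intermediate $r\in[r_{k+1},r_k]$ then rests on the doubling regularity $C^m_{2r}(E)\ge 2^{-m}C^m_r(E)$ (immediate from $\phi^m_r\le\phi^m_{2r}\le 2^m\phi^m_r$), which together with monotonicity of $N_r$ gives $N_r(\pve)\gtrsim C^m_r(E)/k^2$ throughout the octave; since $\log(k^2)/(-\log r)\to0$ this yields $\lbd\pve\ge\lbd^m E$, and choosing instead a rapidly decreasing subsequence realizing the upper limit gives $\ubd\pve\ge\ubd^m E$.

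Finally $(iv)$ follows from $(iii)$ by the standard reduction of packing dimension to upper box dimension. Using the characterization $\pkd F=\inf\{\sup_i\ubd F_i:F\subseteq\bigcup_iF_i\}$ and the corresponding definition of the profile $\pkd^m E$ from Section~\ref{Secpac}, the upper bound $\pkd\pve\le\pkd^m E$ comes from applying $(iii)$ piecewise to any countable decomposition of $E$ and taking the infimum; the reverse inequality for a.a. $V$ follows by running the equilibrium-measure argument on each piece of a countably generated family of decompositions and intersecting the resulting full-measure sets. I expect the scale-uniformity step of the third paragraph to be the genuinely hard part, the remaining ingredients being either classical or routine energy and covering estimates.
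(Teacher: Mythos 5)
Your parts (i)--(iii) are sound. For the upper bounds in (ii)--(iii) you argue in the opposite direction to the paper: you bound $C^m_r(E)$ \emph{below} by $N_r(\pve)/\log(1/r)$ using the uniform measure on a lift of an $r$-separated subset of $\pve$ and a dyadic-annulus count, whereas the paper pushes the equilibrium measure of $E$ forward to $V$, notes that $\phi^m_r(\pi_Vx-\pi_Vy)\geq\phi^m_r(x-y)$ since $\pi_V$ is $1$-Lipschitz, and then invokes the potential-to-covering bound of Lemma \ref{potbound} with $s=m=\dim V$. Both routes carry the same harmless logarithmic factor and both are correct; yours avoids Lemma \ref{potbound} at the cost of the separated-set counting. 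Your almost-sure lower bound is essentially the paper's argument: Fubini against $\gamma_{n,m}\{V:|\pi_V(x-y)|\leq r\}\asymp\phi^m_r(x-y)$ (Lemma \ref{compl}(a)), near-equilibrium measures at a sequence of scales realising the relevant limit, a summable Chebyshev/Borel--Cantelli step, and Lemma \ref{genbound}; your $\lambda=k^2$ weights play the role of the paper's $r_k^{t'-t}$ margin, and the scale-uniformity issue you flag as ``the genuinely hard part'' is handled exactly as you propose and is not where the difficulty lies.

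The genuine gap is in the almost-sure equality of part (iv). Your plan --- ``running the equilibrium-measure argument on each piece of a countably generated family of decompositions and intersecting the resulting full-measure sets'' --- does not work, because by \eqref{packdef} the quantity $\pkd\pve$ is an infimum over \emph{all} countable compact covers of $\pve$, and the dangerous covers are covers of the projection chosen after $V$ is fixed; they need not be projections of covers of $E$, and no countable family of decompositions of $E$ dominates them. (Note also that $\pkd F$ can be strictly smaller than $\ubd F$ for compact $F$, so you cannot pass from the upper box bound on a single projected piece to a packing bound on $\pve$.) The paper closes this by first extracting, via Proposition \ref{goodsubset}, a compact $F\subset E$ with $\pkd^m(F\cap U)>t$ for every open $U$ meeting $F$ --- a localisation step whose proof for general Borel $E$ already requires Howroyd's packing-type measures to produce a compact subset of large profile --- and then applying Baire's category theorem: any countable compact cover of $\pi_V(F)$ must contain a piece with nonempty relative interior, hence containing some $\pi_V(F\cap U_i)$ from a countable basis, and part (iii) applied to the countably many sets $F\cap\overline{U_i}$ gives a single null exceptional set of $V$ outside which every such piece has upper box dimension exceeding $t$. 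Without this localisation-plus-Baire mechanism (or an equivalent), the reverse inequality $\pkd\pve\geq\pkd^m E$ for almost all $V$ is not established.
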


Part (i) of Theorem \ref{mainA} goes back to Marstrand \cite{Mar} and Mattila \cite{Mat4}, and parts (ii)-(iv) were obtained in \cite{FH2,How} but starting with the original cumbersome definitions of the box and packing dimension profiles. After relating capacities and box-counting numbers in Section \ref{seccaps}, parts (ii) and (iii) will follow easily, and (iv) will come from the relationship between packing and box-dimension profiles discussed in Section \ref{Secpac}.

To put these estimates into context,  $\pkd^m E$, etc. cannot be too small compared with  $\pkd E$. Indeed
\be\label{projin}
 \frac{\pkd E}{1 +(1/m -1/n)\pkd E}\ \leq\ \pkd^m E \ \leq \min\{\pkd E, m\};
\ee
these bounds are sharp and there are identical inequalities for $\lbd$ and $\ubd$, see  Section \ref{ineqs} and \cite{FH}. Thus the almost sure dimensions of the projections are also constrained by these bounds.

Whilst equality holds in  Theorem \ref{mainA} for $\gamma_{n,m}$-almost all $V \in G(n,m)$, dimension profiles can provide further information on the size of the set of $V$ for which the box dimensions of the projections $ \pi_V E$ are exceptionally small.
Note that $ G(n,m)$ is a manifold of dimension  $m(n-m)$ so $\hdd G(n,m) =   m(n-m)$ and it is convenient to express our estimates relative to this dimension.
Theorem \ref{mainB} gives estimates for the Hausdorff dimensions of the exceptional sets in terms of $\lbd^s E$, etc. when $0\leq s\leq m$ and Theorem \ref{mainC} gives estimates when $m \leq s\leq n$.

\begin{theo}\label{mainB}
Let $E\subset \mathbb{R}^n$ be a non-empty Borel set (assumed to be bounded in $(ii)$ and $(iii)$) and let $0\leq s\leq m$. Then 
\vspace{-0.5cm} 
{\setstretch{1.5}
\begin{align*}\label{dbeqa}
(i)\quad & \hdd \{V \in  G(n,m): \hdd \pi_V E \,  < \,    \hdd^s E\}
\   \leq\  m(n-m) - (m-s),\\
(ii)\quad & \hdd \{V \in  G(n,m): \lbd \pi_V E \,  < \,   \lbd^s E\}
\   \leq\  m(n-m) - (m-s),\\
(iii)\quad & \hdd \{V \in  G(n,m): \ubd \pi_V E \,  < \,   \ubd^s E\}
\   \leq\  m(n-m) - (m-s),\\
(iv)\quad &\hdd \{V \in  G(n,m) : \pkd \pi_V E \,  < \,   \pkd^s E\}
\    \leq \  m(n-m) - (m-s).
\end{align*}
}
\end{theo}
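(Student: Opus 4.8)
The plan is to prove all four parts by contradiction from a single integral–geometric estimate, applied through the capacity description of the profiles from \ref{seccaps}. Write $\phi^s_r$ for the $r$-kernel defining the capacity $C^s_r$ (of the model form $\min\{1,(r/|z|)^s\}$), set $I^s_r(\mu)=\iint\phi^s_r(x-y)\,d\mu(x)\,d\mu(y)$, so that $C^s_r(E)=\big(\inf_\mu I^s_r(\mu)\big)^{-1}$ over probability measures $\mu$ on $E$. The central claim I would establish first is that if $\nu$ is a Radon measure on $G(n,m)$ with $\nu(B(V,\rho))\le \rho^{t}$ for all $V,\rho$ and some $t> m(n-m)-(m-s)$, then
\[
\int_{G(n,m)} \phi^s_r(\pi_V z)\,d\nu(V)\ \le\ C\,\phi^s_r(z)\qquad(z\in\rn),
\]
with $C$ independent of $r$ and $z$ (and the same with the Riesz kernel $|z|^{-s}$ in place of $\phi^s_r$). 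Here $\phi^s_r(\pi_V z)$ is the kernel computed in $V\cong\mathbb R^m$. To prove it I would reduce to the unit vector $e=z/|z|$ and study the level sets of $V\mapsto|\pi_V e|$: the zero set $\{V: e\perp V\}=\{V: V\subseteq e^\perp\}\cong G(n-1,m)$ has dimension $m(n-1-m)$, so covering its $\rho$-neighbourhood and using the Frostman condition on $\nu$ gives $\nu\{V:|\pi_V e|\le\rho\}\lesssim \rho^{\,t-m(n-1-m)}$; since $t-m(n-1-m)>s$ is exactly the hypothesis $t>m(n-m)-(m-s)$, a layer-cake integration yields the bound. This estimate, with the critical exponent $m(n-m)-(m-s)$ built in, is the engine for the whole theorem and is the main point where the dimension arithmetic of the statement appears.

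For part $(i)$, in the nontrivial case $\hdd E>s$ there is a measure $\mu$ on $E$ with finite $s$-energy $I_s(\mu)<\infty$. If the exceptional set had Hausdorff dimension exceeding $m(n-m)-(m-s)$, Frostman's lemma would give a measure $\nu$ on it with $\nu(B(V,\rho))\le\rho^t$ for some admissible $t$. Fubini and the Riesz form of the central estimate give $\int I_s(\pi_V\mu)\,d\nu(V)\le C\,I_s(\mu)<\infty$, so $I_s(\pi_V\mu)<\infty$ for $\nu$-almost all $V$, whence $\hdd\pve\ge s$ there, contradicting that $\nu$ is supported where $\hdd\pve<\hdd^s E$. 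This is the classical Kaufman–Mattila argument, here placed in the same framework as the box estimates.

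For parts $(ii)$ and $(iii)$ I would run the same scheme scale by scale. Suppose the exceptional set has dimension $>m(n-m)-(m-s)$ and choose $\nu$ and $t$ as above. Let $\mu_r$ be the equilibrium measure for $C^s_r(E)$. Fubini and the central estimate give $\int I^s_r(\pi_V\mu_r)\,d\nu(V)\le C/C^s_r(E)$. Because $s\le m$ we have $\phi^m_r\le\phi^s_r$, so $C^m_r(\pve)\ge 1/I^s_r(\pi_V\mu_r)$, and by \ref{seccaps} the box-counting number $N_r(\pve)$ is comparable to $C^m_r(\pve)$. A Markov inequality then controls, at each scale, the $\nu$-measure of the set where $I^s_r(\pi_V\mu_r)$ is large: taking $r_k=2^{-k}$ and writing $d=\lbd^s E$ gives $\nu\{V: I^s_{r_k}(\pi_V\mu_{r_k})>r_k^{\,d-\varepsilon}\}\le C\,r_k^{\varepsilon/2}$, which is summable, so Borel–Cantelli yields $\lbd\pve\ge d-\varepsilon$ for $\nu$-a.e.\ $V$ after the standard interpolation between dyadic scales; letting $\varepsilon\to0$ contradicts $\nu(\text{exceptional set})>0$. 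Part $(iii)$ is identical except that $d=\ubd^s E$ is realised only along a subsequence $r_k\to0$ (thinned so that $r_k\le 2^{-k}$ to keep Borel–Cantelli summable), and one uses that a $\limsup$ is unaffected by passing to that subsequence, so no interpolation is needed.

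For part $(iv)$ the plan is to reduce to $(iii)$ via the description of packing profiles in \ref{Secpac}, namely $\pkd^s E=\inf\{\sup_i\ubd^s E_i:E=\bigcup_iE_i\}$ and $\pkd\pve=\inf\{\sup_i\ubd\pi_V E_i:E=\bigcup_iE_i\}$. The difficulty—and the main obstacle of the whole theorem—is that $\pkd\pve<\pkd^s E$ forces $V$ to be box-exceptional for \emph{some} Borel subset $F\subseteq E$, and a priori this is an uncountable union of exceptional sets, so the dimension bound from $(iii)$ does not pass to the union directly. I would resolve this by a profile analogue of the Joyce–Preiss selection: for each $c<\pkd^s E$ produce a compact $F\subseteq E$ with $\ubd^s(F\cap U)\ge c$ for every open $U$ meeting $F$, following the decomposition method of \cite{FH2,How}. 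Applying $(iii)$ to the countable family $\{F\cap U_k\}$, where $\{U_k\}$ is a countable basis, the union of the exceptional sets still has Hausdorff dimension at most $m(n-m)-(m-s)$ by countable stability; for $V$ outside it, $\ubd(\pi_V F\cap W)\ge c$ for every open $W$ meeting $\pi_V F$, so $\pkd\pve\ge\pkd\pi_V F\ge c$. Letting $c\uparrow\pkd^s E$ through a countable sequence gives $\pkd\pve\ge\pkd^s E$ off a set of dimension at most $m(n-m)-(m-s)$, which is $(iv)$.
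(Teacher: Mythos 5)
Your proposal is correct and follows essentially the same route as the paper: a Frostman measure on the putative exceptional set combined with the geometric estimate $\nu\{V:|\pi_V z|\le r\}\lesssim\phi_r^s(z)$ (your integrated kernel form of Lemma \ref{compl}(b)), then Fubini, Markov and Borel--Cantelli over dyadic scales together with the capacity/box-counting comparison of Section \ref{seccaps} for parts (ii)--(iii), and the localisation of $\pkd^s$ to a compact $F$ with $\ubd^s(F\cap U)$ large on every open $U$ (the paper's Proposition \ref{goodsubset}) plus a countable basis and Baire's theorem for part (iv). The only cosmetic differences are that you phrase the scale-by-scale step via energies of projected equilibrium measures rather than via $(\mu_V\times\mu_V)\{|w-z|\le r\}$, and that working with a strict Frostman exponent yields the dimension bound but not the slightly stronger $\mathcal{H}^{m(n-m)-(m-s)}$-null statement the paper notes in passing.
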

Noting that $\lbd^s E$, etc. increases with $s$, these bounds for the Hausdorff dimension of the exceptional sets of $V$ decrease as  $s$  decreases.

Using capacity ideas, all parts of Theorem \ref{mainB} may be derived using minor modifications to the proofs for Theorem \ref{mainA}. Part (i) was first obtained by Kaufman \cite{Kau} when he introduced the potential theoretic approach for the Hausdorff dimension of projections. Parts (ii)-(iv) were established in \cite{FH2,How} using the earlier definitions of dimension profiles, but the proofs here using capacities are rather simpler. 

The conclusions of Theorem \label{mainB} can be strengthened slightly: in all parts (ii)-(iv) the ${m(n-m) - (m-s)}$-dimensional Hausdorff measure of the sets on the left must be 0, see the note at the end of Section \ref{sec3.1}. However for consistency it seems more natural to state the theorem in terms of Hausdorff dimension.

The spirit of the next theorem is that if the dimension of $E$ is significantly larger than that of the typical projection given by Theorem   \ref{mainA} then the exceptional set of $V$ will be small.

\begin{theo}\label{mainC}
Let $E\subset \mathbb{R}^n$ be a non-empty Borel set (assumed to be bounded in $(ii)$ and $(iii)$) and let $0\leq \gamma \leq n-m$. Then 
\vspace{-0.5cm} 
{\setstretch{1.5}
\begin{align*}
(i)\quad & \hdd \{V \in  G(n,m): \hdd \pi_V E \,  < \,   \hdd^{m+\gamma} E -\gamma\}
\   \leq\  m(n-m) -\gamma,\\
(ii)\quad & \hdd \{V \in  G(n,m) : \lbd \pi_V E \,  < \,   \lbd^{m+\gamma}E - \gamma \}
\ \leq\  m(n-m)  -\gamma,\\
(iii)\quad & \hdd \{V \in  G(n,m) : \ubd \pi_V E \,  < \,   \ubd^{m+\gamma}E - \gamma \}
\ \leq\  m(n-m) -\gamma,\\
(iv)\quad &\hdd \{V \in  G(n,m) : \pkd \pi_V E \,  < \,   \pkd^{m+\gamma}E - \gamma\}
\    \leq \ m(n-m) -\gamma.
\end{align*}
}
\end{theo}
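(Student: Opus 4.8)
The plan is to treat parts (ii) and (iii) together by the capacity analogue of the classical Fourier argument, to deduce (iv) from (iii) via the packing-from-upper-box relation of Section \ref{Secpac}, and to regard (i) as the known Kaufman--Mattila estimate, included for comparison, which is the Riesz-energy specialisation of the same scheme. Throughout I would fix the profile exponent $s=m+\gamma$ and argue by contradiction: supposing the relevant exceptional set had Hausdorff dimension exceeding $m(n-m)-\gamma$, Frostman's lemma supplies a probability measure $\nu$ on a compact subset of it with finite $d$-energy $I_d(\nu)<\infty$ for some $d$ with $m(n-m)-\gamma < d \le m(n-m)$. The aim is then to show that $\nu$-almost every $V$ fails to be exceptional, which is the required contradiction.

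The engine is an averaged capacity estimate. Writing $\phi^s_r$ for the kernel of \eqref{ker} and using Parseval to represent the capacity energy $\int\!\!\int \phi^m_r\bigl(\pi_V(x-y)\bigr)\,d\mu(x)\,d\mu(y)$ of a pushed-forward measure on the frequency side, I would integrate against $\nu$ and interchange the order of integration, reducing everything to the Grassmannian integral $\int_{G(n,m)}(\text{transform of }\pi_V\xi)\,d\nu(V)$ for each frequency $\xi$. The crucial geometric input is the concentration bound
\[
\nu\bigl(\{V \in G(n,m): \operatorname{dist}(\xi/|\xi|,V) < \epsilon\}\bigr)\ \lesssim\ \epsilon^{\,d-(m-1)(n-m)},
\]
which follows from $I_d(\nu)<\infty$ since $\{V: \xi \in V\}\cong G(n-1,m-1)$ has codimension $n-m$ in $G(n,m)$. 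Feeding this into the frequency integral, the decay of $\widehat{\phi^{m+\gamma}_r}$ associated with the profile exponent $s=m+\gamma$ combines with the bound above to give convergence precisely in the range corresponding to $u < s + d - m(n-m)$. Because $d > m(n-m)-\gamma$, this range reaches up to $u=m$; this is exactly the gain over the elementary bound of Theorem \ref{mainB} and the arithmetic source of the exponent $m(n-m)-\gamma$.

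With the averaged estimate in hand I would keep all constants uniform in $r$ and then pass to the limit defining the profiles in \eqref{dimpro}: applying $\lim_{r\to0}\log(\cdot)/(-\log r)$ to the averaged capacity inequality yields, for $\nu$-almost every $V$, the lower bounds $\lbd\pi_V E \ge \lbd^{m+\gamma}E-\gamma$ and $\ubd\pi_V E \ge \ubd^{m+\gamma}E-\gamma$, contradicting the assumed exceptionality and proving (ii) and (iii). Part (iv) then follows by expressing the packing dimension profile through upper box dimension profiles as in Section \ref{Secpac}: decomposing $E$ into countably many pieces, applying (iii) to each together with the modified upper-box characterisation, and using countable stability of Hausdorff dimension to bound the exceptional set. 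Part (i) is the corresponding Riesz-energy computation, in which $\phi^m_r$ is replaced by $|x|^{-m}$ and the same slicing estimate controls $\int_{G(n,m)}|\pi_V\xi|^{-u}\,d\nu(V)$.

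The main obstacle is the averaged estimate itself, and within it two points deserve care. First, the Grassmannian concentration bound must be established with the sharp exponent $d-(m-1)(n-m)$ and, for the box profiles, with constants independent of the scale $r$, so that the profile limit survives; this is where the Fourier representation of the scaled kernel $\phi^s_r$ of \eqref{ker}, and in particular enough positivity to apply Parseval, is needed. Second, one must convert the resulting $\nu$-almost-everywhere finiteness into genuine lower bounds for $\lbd\pi_V E$ and $\ubd\pi_V E$, rather than merely for an averaged quantity; this uses the relationship between capacities and box-counting numbers of projections from Sections \ref{seccaps} and \ref{sec3.1}. Everything else is bookkeeping with the exponents, the key identity being that convergence persists up to $u = s + d - m(n-m)$, which pushes $u$ past $m$ exactly when $d > m(n-m)-\gamma$.
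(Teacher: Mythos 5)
Your overall architecture matches the paper's: argue by contradiction via a Frostman measure on the putative exceptional set, represent the kernel energies on the Fourier side, use the Grassmannian concentration estimate with exponent $t-(m-1)(n-m)$, and let the exponent arithmetic turn $d>m(n-m)-\gamma$ into the gain of $\gamma$; part (iv) is likewise deduced from (iii). However, two steps that you defer are exactly where the work lies, and as described they would fail. First, you propose to apply Parseval to the kernel $\phi_r^s$ of \eqref{ker} and to exploit ``the decay of $\widehat{\phi^{m+\gamma}_r}$'', remarking only that ``enough positivity'' is needed. In fact $\widehat{\phi_r^s}$ is an oscillating Bessel-type integral with no useful sign, so Parseval applied directly to $\phi_r^s$ yields neither the upper bounds on energies nor the lower bounds on capacities in the one-sided form the argument requires. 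The paper's resolution is to replace $\phi_r^s$ by the mollified kernel $\psi_r^s=(|\cdot|^{-s}\ast{\rm e})(x/r)$ of \eqref{kerpsi}, establish the two-sided comparison \eqref{comp}, and compute $\widehat{\psi_r^s}(\xi)=c_3r^s|\xi|^{s-n}{\rm e}(r\xi)>0$; all subsequent estimates run through $\psi_r^s$. Second, you work with the pushforward $\pi_V\mu$, whose transform on $V$ is the restriction of $\widehat{\mu}$ to the null slice $V$, so for a fixed $V$ it cannot be dominated by the global quantity $\int_{\rn}|\widehat{\mu}(\xi)|^2(\cdots)\,d\xi$. The paper instead uses the Gaussian-weighted projections $\nu_V$ of \eqref{nudef}, whose transform is a fibre average $\int_{V^\perp}\widehat{\mu}(\xi_V,\xi_{V^\perp}){\rm e}(\xi_{V^\perp})\,d\xi_{V^\perp}$; Cauchy--Schwarz (Lemma \ref{est1}) then gives a pointwise-in-$V$ bound that can be integrated against the Frostman measure using Lemma \ref{angint}. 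Without one of these two devices the ``averaged capacity estimate'' you call the engine is not obtainable.

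Two smaller gaps. Passing from the averaged estimate to $\lbd\pi_VE\ge\lbd^{m+\gamma}E-\gamma$ for almost every $V$ is not a matter of applying $\lim_{r\to0}\log(\cdot)/(-\log r)$ to an averaged inequality: the near-optimal measure $\mu^r$ changes with $r$, so one must run a Borel--Cantelli argument over the scales $r=2^{-k}$ (the sets $W_{2^{-k}}$ and $W=\limsup_kW_{2^{-k}}$ in the paper) to obtain a single null set of $V$ valid at all scales, and then feed the resulting pair-correlation bound into Lemma \ref{genbound}. And for (iv), decomposing $E$ into countably many pieces and applying (iii) to each only bounds $\pkd\pi_VE$ from \emph{above}; to bound it from below outside a small exceptional set you need the localisation Proposition \ref{goodsubset} together with the Baire category argument handling arbitrary countable compact covers of $\pi_VF$, as carried out in Section \ref{Secpac}.
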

These estimates are expressed in terms of $ \lbd^{m+\gamma}E - \gamma$, etc. since $\gamma$ cannot easily be isolated from such expressions (except in case (i)). It follows from inequality \eqref{ineq3} derived in Section \ref{ineqs} that  $\lbd^{m+\gamma}E - \gamma$, etc. decrease continuously as $\gamma$ increases, equalling the typical projection dimension  $\lbd^m E$ when $\gamma= 0$, and $\lbd E-(n-m)$ when $\gamma= n-m$. (Of course the estimates become trivial unless  $0\leq \lbd^{m+\gamma}E - \gamma\leq m$.)

Note that the Hausdorff dimension parts (i) of Theorems \ref{mainB} and  \ref{mainC} can be rearranged to the more familiar form
\begin{align*}
 \hdd \{V \in & G(n,m): \hdd \pi_V E \,  < s\}\\
&\leq\ 
 \left\{
 \begin{array}{ll}
m(n-m) - (m-s) & \quad (0\leq s\leq m)\\
m(n-m) - (\hdd E-s) & \quad (\hdd E-(n-m) \leq s\leq \hdd E)
\end{array}
\right. .
\end{align*}

Case $(i)$ of Theorem \ref{mainC} was established in \cite{Fa1}, using Fourier transforms, see also \cite{Mat2}. We will use Fourier methods to obtain the box  dimension cases $(ii)$-$(iii)$, from which we will deduce $(iv)$ . 

We remark that other recent delicate estimates have been given by \cite{Obe} using Radon transform estimates and by \cite{Bou,He} using ideas from additive combinatorics.

\section{Capacities and box-counting dimensions}\label{seccaps}
\setcounter{equation}{0}
\setcounter{theo}{0}

Throughout this section we will consider projections of a  Borel set  $E\subset \rn$ which we will take to be non-empty and bounded to ensure that its box dimensions are defined. Moreover, since the lower and upper box dimensions and the capacities of a set equal those of its closure, it is enough to prove our results under the assumption that $E$ is non-empty and compact.

\subsection{Capacity, energy and dimension profiles}
Potential kernels of the form $\phi(x) = |x|^{-s}$ are widely used in Hausdorff dimension arguments, see for example \cite{Kau,KM,Mat,Mat2}. For box-counting dimensions, another class of kernels turns out to be useful.
Let $s>0$ and $r>0$ and define the {\it potential kernels} 
\be\label{ker}
 \phi_r^s(x)= \min\Big\{ 1, \Big(\frac{r}{|x|}\Big)^s\Big\} \qquad (x\in \rn),
\ee
originally introduced in \cite{FH,FM}.
Let $E\subset \rn$ be non-empty and compact and let   ${\mathcal M}(E)$ denote the set of Borel probability measures supported by $E$. The {\em energy} of  $\mu  \in {\mathcal M}(E)$ with respect to  $\phi_r^s$ is defined by
$$\int\int \phi_r^s(x-y)d\mu(x)d\mu(y),$$
and the {\em potential}  of $\mu$  at $x\in \rn$ by 
$$ \int \phi_r^s(x-y)d\mu (y).$$
The {\em capacity} $C_r^s(E) $ of $E$  is the reciprocal of the minimum energy achieved by probability measures on $E$, that is 
\be\label{minen}
\frac{1}{C_r^s(E)}\  = \ \inf_{\mu \in {\mathcal M}(E)}\int\int \phi_r^s(x-y)d\mu(x)d\mu(y);
\ee
 since  our kernels $\phi_r^s$ are continuous and $E$ is  compact, $0<C_r^s(E)<\infty$. For a general bounded set the {\it capacity} is defined to be that of its closure.

The following energy-minimising property is standard in potential theory, but it is key for our development, so we give the short proof which is particularly simple for continuous kernels.

\begin{lem}\label{equilib}
Let $E\subset \rn$ be non-empty and compact and $s>0$ and $r>0$.  Then the infimum in \eqref{minen} is attained by a measure $\mu_0  \in {\mathcal M}(E)$. Moreover 
\be\label{pot}
 \int \phi_r^s(x-y)d\mu_0  (y)\ \geq\  \frac{1}{C_r^s(E)}
\ee
for all $x\in E$, with equality for $\mu_0 $-almost all $x \in E$.
\end{lem}
\begin{proof}
Let $\mu_k \in {\mathcal M}(E)$ be such that $\int\int \phi_r^s(x-y)d\mu_k(x)d\mu_k(y) \to \gamma := 1/C_r^s(E)$. Then $\mu_k$ has a  subsequence that is weakly convergent to some $\mu_0  \in {\mathcal M}(E)$. Since $\phi_r^s(x-y)$ is continuous the infimum is attained.

Suppose that $\int \phi_r^s(z-y)d\mu_0(y)\leq \gamma - \epsilon$ for some $z\in E$ and $\epsilon >0$. Let $\delta_z$ be the unit point mass at $z$ and for $0<\lambda<1$ let $\mu_\lambda =\lambda \delta_z + (1-\lambda)\mu_0  
\in {\mathcal M}(E)$. Then
\begin{eqnarray*}
\int\int \phi_r^s(x-y)d\mu_\lambda(x)d\mu_\lambda(y)
&=& \lambda^2 \phi_r^s(z-z) +2\lambda(1-\lambda)\int \phi_r^s(z-y)d\mu_0 (y)\\
&&+\ (1-\lambda)^2 \int\int \phi_r^s(x-y)d\mu_0 (x)d\mu_0 (y)\\
&\leq& \lambda^2 +2\lambda(1-\lambda)(\gamma - \epsilon) +(1-\lambda)^2\gamma\\
&=& \gamma - 2\lambda\epsilon + O( \lambda^2),
\end{eqnarray*}
which contradicts that $\mu_0 $ minimises the energy integral on taking $\lambda$ sufficiently small.
Thus  inequality \eqref{pot} is satisfied for all $x\in E$, and equality for $\mu_0 $-almost all $x$ is immediate from \eqref{minen}. 
\end{proof}

For $s> 0$ we define the {\it lower} and {\it upper} $s$-{\it box dimension profiles} of $E\subseteq \mathbb{R}^n$ in terms of capacities:
\be\label{dimpro}
\lbd^s E\  =\   \varliminf_{r\to 0} \frac{\log  C_r^s(E)}{-\log r} \quad \mbox{and}\quad  \ubd^s E\   =\   \varlimsup_{r\to 0} \frac{\log  C_r^s(E)}{-\log r}.
\ee
Note that $\lbd^s E = \lbd^s \overline{E}$ and  $\ubd^s E = \ubd^s \overline{E}$   where $\overline{E}$ denotes the closure of $E$.

\subsection{Capacities and box-counting numbers}\label{sec2.2}

For a non-empty compact  $E\subset \rn$,  let $N_r(E)$ be the minimum number of sets of diameter $r$ that can cover $E$. Recall that the {\em lower} and {\em upper box-counting dimensions} or {\em box dimensions} of $E$ are defined by 
\be\label{boxdims}
\lbd E\ =\ \varliminf_{r\to 0} \frac{\log  N_r(E)}{-\log r} 
\quad \mbox{ and }\quad  \ubd E\ = \ \varlimsup_{r\to 0} \frac{\log  N_r(E)}{-\log r}, 
\ee
with the {\em box-counting dimension} given by the common value if the limit exists, see for example \cite{Fa} for a discussion of box dimensions and equivalent definitions; in particular the box dimensions of a set equal those of its closure.

In this section we prove Corollary  \ref{capcor}, that provided that $s\geq n$ the capacity $C_r^s(E)$ and the covering number $N_r(E)$ are comparable. This is not necessarily the case if $0\leq s<n$ and  it is this disparity that gives the formulae for the box dimensions of projections. The next two lemmas obtain lower and upper bounds for $N_r(E)$ in terms of energies or potentials.

\begin{lem}\label{genbound}
Let $E\subset \rn$ be non-empty and compact and let $r>0$. 
Suppose that there is a measure  $\mu\in {\mathcal M}(E)$ such that for some $\gamma>0$
\be\label{est0}
(\mu \times \mu)\big\{(x,y): |x-y| \leq r\big\}\ \leq\ \gamma.
\ee
Then 
\begin{equation}\label{concl1}
N_{r}(E) \geq \frac{c_n}{\gamma},
\end{equation}
where  $c_n$ depends only on $n$. In particular \eqref{concl1} holds if, for some $s>0$,
\be\label{est0a}
\int\int \phi_r^s(x-y)d\mu(x)d\mu(y)\ \leq\ \gamma.
\ee
\end{lem}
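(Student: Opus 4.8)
The statement to prove is: given a measure $\mu\in {\mathcal M}(E)$ with $(\mu\times\mu)\{(x,y):|x-y|\le r\}\le\gamma$, one has $N_r(E)\ge c_n/\gamma$; and the same conclusion follows from the energy bound $\int\int\phi_r^s\,d\mu\,d\mu\le\gamma$.

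The plan is to prove the first (covering/diagonal) statement and then deduce the energy version from it. For the main claim, I would take a near-optimal cover of $E$ by $N:=N_r(E)$ sets $U_1,\dots,U_N$, each of diameter $r$. Because $\mu$ is a probability measure supported on $E$, these sets carry all the mass, so $\sum_{i=1}^N \mu(U_i)\ge \mu(E)=1$. The key observation is that each $U_i$ has diameter $r$, so whenever $x,y$ both lie in the same $U_i$ we have $|x-y|\le r$, i.e. $(x,y)$ belongs to the ``diagonal-strip'' set in \eqref{est0}. Hence
\be\label{diag}
\gamma\ \ge\ (\mu\times\mu)\{(x,y):|x-y|\le r\}\ \ge\ \sum_{i=1}^N (\mu\times\mu)(U_i\times U_i)\ =\ \sum_{i=1}^N \mu(U_i)^2,
\ee
where the middle inequality would require the $U_i$ to be disjoint. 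By Cauchy--Schwarz, $1\le\big(\sum_i\mu(U_i)\big)^2\le N\sum_i\mu(U_i)^2\le N\gamma$, which gives $N\ge 1/\gamma$, essentially the desired bound.

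The main obstacle is that an optimal cover by diameter-$r$ sets need not be disjoint, so the sets $U_i\times U_i$ may overlap and the middle inequality in \eqref{diag} can fail. I expect this to be handled by a bounded-overlap argument, which is exactly where the dimensional constant $c_n$ enters. One clean way is to pass from an arbitrary cover to a controlled one: replace the $U_i$ by a grid of mesh-$r$ cubes (or a maximal $r$-separated set of points with balls $B(x_i,r)$), noting that the number of such cubes meeting $E$ is comparable to $N_r(E)$ up to a factor depending only on $n$ (since a diameter-$r$ set meets only a bounded number $c_n'$ of grid cubes, and conversely). Using a disjoint family such as half-open grid cubes $Q_i$ of side $r/\sqrt n$ restores disjointness of the $Q_i\times Q_i$, so \eqref{diag} holds verbatim with the $Q_i$; then $\sum_i\mu(Q_i)^2\ge (\sum_i\mu(Q_i))^2/(\#\{Q_i\})=1/(\#\{Q_i\})$ yields $\#\{Q_i\}\ge 1/\gamma$, and comparing grid-cube counts with $N_r(E)$ absorbs the constant into $c_n$. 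I would state the grid comparison as a standard fact (it is the usual equivalence of box-counting definitions cited via \cite{Fa}).

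Finally, to obtain the ``in particular'' clause, I would note that the kernel $\phi_r^s$ in \eqref{ker} satisfies $\phi_r^s(x-y)=1$ whenever $|x-y|\le r$, so
$$
(\mu\times\mu)\{(x,y):|x-y|\le r\}\ =\ \int\int \1_{\{|x-y|\le r\}}\,d\mu(x)\,d\mu(y)\ \le\ \int\int \phi_r^s(x-y)\,d\mu(x)\,d\mu(y)\ \le\ \gamma.
$$
Thus the energy hypothesis \eqref{est0a} implies \eqref{est0} with the same $\gamma$, and \eqref{concl1} follows immediately from the first part. This reduction is routine; the only genuine work is the overlap/constant bookkeeping described above.
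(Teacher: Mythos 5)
Your proposal is correct and follows essentially the same route as the paper: the paper also replaces the arbitrary cover by coordinate mesh cubes of diameter $r$, applies Cauchy's inequality to $\sum_C\mu(C)^2$, uses that points in a common cube are within distance $r$ to invoke the hypothesis, and compares the mesh-cube count with $N_r(E)$ via the fact that a diameter-$r$ set meets at most $(3\sqrt n)^n$ such cubes. The deduction of the energy version from the pointwise bound $\1_{B(0,r)}(x-y)\leq\phi_r^s(x-y)$ is likewise identical.
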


\begin{proof}
Let ${\cal C}(E)$ be the set of closed coordinate mesh cubes of diameter $r$ (i.e. cubes of the form $\prod_{i=1}^n [m_i rn^{-1/2},(m_i +1) rn^{-1/2}]$ where the $m_i$ are integers)
 that intersect $E$;  suppose that there are  $N'_{r}(E)$ such cubes.
Using  Cauchy's inequality,
\begin{eqnarray*}
1\ =\ \mu(E)^2    &\leq & \bigg(\sum_{C\in {\cal C}(E)}\mu(C)\bigg)^2\\
& \leq &  N'_{r}(E) \sum_{C\in {\cal C}(E)}\mu(C) ^2\\
& = &  N'_{r}(E) \sum_{C\in {\cal C}(E)}(\mu \times \mu)\big\{(x,y)\in C\times C\big\}\\
& \leq &  N'_{r}(E)\, (\mu \times \mu) \big\{(x,y): |x-y| \leq r\big\}\\
& \leq &  N'_{r}(E)\,  \gamma\\
& \leq &  (3 \sqrt{n})^{n}\, N_{r}(E)\,  \gamma,
\end{eqnarray*}
noting that a set of diameter $r$ can intersect at most $(3 \sqrt{n})^{n}$ of the cubes of ${\cal C}(E)$.

Finally since $1_{B(0,r)}(x-y)\leq \phi_r^s(x-y)$,  inequality \eqref{est0a}  implies \eqref{est0}.
\end{proof}

\begin{lem}\label{potbound}
Let $E\subset \rn$ be non-empty and compact and let $s>0$ and $r>0$. 
Suppose that $E$ supports a measure  $\mu\in {\mathcal M}(E)$ such that for some $\gamma>0$
\be\label{est2}
\int \phi_r^s(x-y)d{\mu} (y)\ \geq \ \gamma \qquad \mbox{ for all } x\in E.
\ee
Then
\be\label{est2c}
N_r(E)\  \leq \  
\left\{
\begin{array}{ll}
{\displaystyle \frac{c_{n,n}\lceil\log_2 ({\rm diam}E / r)+1\rceil}{\gamma } }& \mbox{ if } s=n    \\
{\displaystyle  \frac{c_{n,s}}{\gamma}}&  \mbox{ if } s>n   
\end{array}
\right.  ,
\ee
where $c_{n,s}$ depends only on $n$ and $s$.
\end{lem}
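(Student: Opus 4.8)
The plan is to turn the pointwise lower bound \eqref{est2} into an upper bound on a covering number by summing \eqref{est2} over a suitably spread finite set of points. First I would bound $N_r(E)$ by the number $N'_r(E)$ of closed coordinate mesh cubes of diameter $r$ (those used in the proof of Lemma \ref{genbound}) that meet $E$; these cubes cover $E$ by sets of diameter $r$, so $N_r(E)\le N'_r(E)$ and it is enough to bound $N'_r(E)$. Choosing one point $x_j\in E$ in each such cube, applying \eqref{est2} at each $x_j$, summing over $j$ and interchanging the (finite) sum with the integral gives
\[
N'_r(E)\,\gamma\ \le\ \sum_j\int\phi_r^s(x_j-y)\,d\mu(y)\ =\ \int\Big(\sum_j\phi_r^s(x_j-y)\Big)\,d\mu(y).
\]
Since $\mu\in{\mathcal M}(E)$ is a probability measure, everything reduces to a uniform estimate $\sum_j\phi_r^s(x_j-y)\le K$ for $y\in E$, which would give $N_r(E)\le N'_r(E)\le K/\gamma$.

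The main work is this uniform bound, which I would obtain by splitting the $x_j$ into dyadic shells about the fixed $y$. The decisive geometric input is that the $x_j$ lie in distinct mesh cubes of side $r/\sqrt n$, so a volume comparison bounds the number of $x_j$ in $B(y,2^{k+1}r)$ by $c_n2^{kn}$ for each integer $k\ge0$ (and by $c_n$ for those in $B(y,r)$). On the shell $2^kr\le|x_j-y|<2^{k+1}r$ one has $\phi_r^s(x_j-y)=(r/|x_j-y|)^s\le2^{-ks}$, so this shell contributes at most $c_n2^{kn}2^{-ks}=c_n2^{k(n-s)}$, and hence
\[
\sum_j\phi_r^s(x_j-y)\ \le\ c_n+c_n\sum_{k\ge0}2^{k(n-s)}.
\]
For $s>n$ the geometric series converges, giving $K=c_{n,s}$ and the second case of \eqref{est2c}. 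For $s=n$ every term equals $1$, but because $x_j,y\in E$ only the shells with $2^kr\le{\rm diam}\,E$ are non-empty, so at most $\lceil\log_2({\rm diam}\,E/r)+1\rceil$ values of $k$ contribute, giving the first case.

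The only subtle point, and where I would be most careful, is this shell estimate and specifically the hypothesis $s\ge n$. For $s<n$ the exponents $k(n-s)$ are positive, so $\sum_k2^{k(n-s)}$ grows like a positive power of ${\rm diam}\,E/r$ and no bound of the form \eqref{est2c} can hold; the borderline case $s=n$ is exactly where the geometric series degenerates into the harmless logarithmic factor. This transition (convergence for $s>n$, a $\log$ at $s=n$, power growth for $s<n$) is precisely the mismatch between $C_r^s(E)$ and $N_r(E)$ that later yields the projection formulae, so I would track the dependence of the constants on $n$ and $s$, and the appearance of the $\log_2({\rm diam}\,E/r)$ factor, explicitly.
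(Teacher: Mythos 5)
Your proposal is correct and uses the same essential ingredients as the paper's proof: sum the potential lower bound \eqref{est2} over a well-spread finite subset of $E$ whose cardinality controls $N_r(E)$, decompose into dyadic shells, and use a volume comparison to control how many points land in each shell, with exactly the geometric-series versus logarithmic dichotomy between $s>n$ and $s=n$. The only difference is bookkeeping: you interchange the sum and the integral and bound $\sum_j\phi_r^s(x_j-y)$ uniformly in $y$ by counting mesh cubes per shell, whereas the paper first bounds each potential by $2^s\sum_k 2^{-ks}\mu(B(x_i,2^kr))$, pigeonholes a single good scale $k$, and then bounds the overlap multiplicity of the balls $B(x_i,2^kr)$ --- the two computations are transposes of one another and yield the same constants.
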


\begin{proof}
 Write $M = {\rm diam}E$. For all $x\in E$,
\begin{eqnarray*}
 \int \phi_r^s(x-y)d{\mu} (y)
&\leq& \mu(B(x,r)) +\sum_{k=0}^{\lceil\log_2 (M/r)-1\rceil} \int _{B(x,2^{k+1}r)\setminus  B(x,2^{k}r)}2^{-ks} d\mu(y)\\
&\leq& \mu(B(x,r)) +\sum_{k=0}^{\lceil\log_2 (M/r)-1\rceil} 2^{-ks} \mu(B(x,2^{k+1}r)) \\
&\leq& 2^s\sum_{k=0}^{\lceil\log_2 (M/r)\rceil} 2^{-ks} \mu(B(x,2^{k}r)).
\end{eqnarray*}
Let $B(x_i, r),\  i = 1,\ldots, N'_r(E)$, be a maximal collection of disjoint balls of radii $r$ with $x_i \in E$, where  here $N'_r(E)$ denotes this maximum number.
From \eqref{est2}, for each $i$,
$$ \gamma\ \leq\   \int \phi_r^s(x_i-y)d{\mu} (y)\ \leq2^s\ \sum_{k=0}^{\lceil\log_2 (M/r)\rceil} 2^{-ks} \mu(B(x_i,2^{k}r)).$$
Summing over the $x_i$,
$$N'_r(E)\gamma\  \leq   \sum_{k=0}^{\lceil\log_2 (M/r)\rceil} 2^{s(1-k)} \sum_{i=1}^{N'_r(E)}  \mu(B(x_i,2^{k}r)),$$
so, for some $k$ with $0\leq k \leq\lceil \log_2 (M/r)\rceil$,
\be\label{est3}
2^{s(1-k)}\sum_{i=1}^{N'_r(E)}  \mu(B(x_i,2^{k}r))  \ \geq\
\left\{
\begin{array}{ll}
 N'_r(E)\gamma\big/  \lceil\log_2 (M/r)+1\rceil   & \mbox{ if } s=n    \\
 N'_r(E)\gamma\, 2^{k(n-s)}  (1-2^{n-s}) &  \mbox{ if } s>n   
\end{array}
\right. ,
\ee
the case of $s>n$ coming from comparison with a geometric series. For all $x\in E$ a volume estimate using the disjoint balls $B(x_i,r)$ shows that at most $ (2^k +1)^n\leq 2^{(k+1)n}$ of the $x_i$ lie in $B(x,2^{k}r)$. Consequently each  $x$ belongs to at most $2^{(k+1)n}$ of the $B(x_i,2^{k}r)$. Thus 
\be\label{est4}
\sum_{i=1}^{N'_r(E)}  \mu(B(x_i,2^{k}r))\ \leq\ 2^{(k+1)n}\mu(E)\ 
=\ 2^{n+s}2^{-s(1-k)}2^{k(n-s)}\ \leq \ 2^{n+s} 2^{-s(1-k)},
\ee
using that $s\geq n$.
Inequality  \eqref{est2c} now  follows from \eqref{est3},  \eqref{est4} and  that $N_r(E)\leq a_n N'_r(E)$ where $a_n$ is the minimum number of balls  in $\mathbb{R}^n$ of diameter $1$ that can cover a ball of radius 1.
\end{proof}

The comparability of box-counting numbers and capacities for $s\geq n$  now follows on combining the previous two lemmas.

\begin{cor}\label{capcor}
Let $E\subset \rn$ be non-empty and bounded and let $r>0$. Then
\be\label{capineq}
c_n C^s_r(E)\  \leq\ N_r(E)\  \leq \  
\left\{
\begin{array}{ll}
{\displaystyle c_{n,n}\lceil\log_2 ({\rm diam}E / r)\rceil\  C^s_r(E) }& \mbox{ if } s=n    \\
{\displaystyle  c_{n,s}\ C^s_r(E)}&  \mbox{ if } s>n   
\end{array}
\right.  .
\ee
\end{cor}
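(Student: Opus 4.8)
The plan is to combine the two preceding lemmas, which already do all the analytic work; the corollary is essentially just a matter of choosing the right test measure in each lemma to turn the energy/potential bounds into capacity bounds. Since both the box dimensions and the capacities of a bounded set agree with those of its closure, I may assume $E$ is non-empty and compact throughout, so that Lemma \ref{equilib} applies and an equilibrium measure exists.

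For the lower bound $c_n C_r^s(E) \leq N_r(E)$, I would apply Lemma \ref{genbound}. Let $\mu_0$ be the energy-minimising measure from Lemma \ref{equilib}, so that its energy equals $1/C_r^s(E)$. Taking $\gamma = 1/C_r^s(E)$ in hypothesis \eqref{est0a}, the ``in particular'' clause of Lemma \ref{genbound} gives $N_r(E) \geq c_n/\gamma = c_n C_r^s(E)$, which is exactly the desired left-hand inequality.

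For the upper bound I would apply Lemma \ref{potbound}, again to the equilibrium measure $\mu_0$. By the potential estimate \eqref{pot} of Lemma \ref{equilib}, the potential of $\mu_0$ satisfies $\int \phi_r^s(x-y)\,d\mu_0(y) \geq 1/C_r^s(E)$ for all $x \in E$, so hypothesis \eqref{est2} holds with $\gamma = 1/C_r^s(E)$. Feeding this into \eqref{est2c} yields $N_r(E) \leq c_{n,n}\lceil\log_2(\mathrm{diam}\,E/r)+1\rceil\, C_r^s(E)$ when $s=n$ and $N_r(E) \leq c_{n,s}\, C_r^s(E)$ when $s>n$, matching the right-hand side of \eqref{capineq} (absorbing the additive $+1$ into the ceiling and relabelling constants as needed).

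The corollary is therefore not where the difficulty lies — the real content was established in the two lemmas, and the only ``obstacle'' is the conceptual step of recognising that the equilibrium measure $\mu_0$ of Lemma \ref{equilib} simultaneously satisfies both an energy bound (its energy being exactly $1/C_r^s(E)$) and a uniform lower potential bound (also $1/C_r^s(E)$, by \eqref{pot}). This dual role is precisely what lets a single measure feed both Lemma \ref{genbound} and Lemma \ref{potbound} with the same value $\gamma = 1/C_r^s(E)$, producing two-sided control of $N_r(E)$ by the one quantity $C_r^s(E)$. I would finish by remarking that the restriction $s \geq n$ is used only in Lemma \ref{potbound} (in the step \eqref{est4}), which is why comparability fails for $0 \leq s < n$ and leaves room for the projection phenomena exploited later.
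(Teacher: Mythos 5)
Your proposal is correct and follows exactly the paper's argument: apply Lemma \ref{genbound} and Lemma \ref{potbound} to the equilibrium measure from Lemma \ref{equilib} with $\gamma = 1/C_r^s(E)$, then pass from compact to general bounded $E$ via the closure. The remark about absorbing the additive $+1$ into the constant is the right way to reconcile the minor notational discrepancy between \eqref{est2c} and \eqref{capineq}.
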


\begin{proof}
By Lemma \ref{equilib}  we may find $\mu\in {\mathcal M}(E)$ satisfying  \eqref{pot}, so the conclusion when $E$ is compact follows immediately from Lemmas \ref{genbound} and \ref{potbound}. For general bounded $E$, since $C^s_r(E)=C^s_r(\overline{E})$ and $N_r(E)=N_r(\overline{E})$, where $\overline{E}$ is the closure of $E$, the conclusion transfers directly to all non-empty bounded $E$.
\end{proof}

Equality of the box dimensions and the dimension profiles for $s\geq n$ is immediate from  Corollary \ref{capcor}. 

\begin{cor}\label{sgeqn}
Let $E\subset \mathbb{R}^n$ be non-empty and bounded. If $s\geq n$ then 
$$\lbd^s E\   =\   \lbd E  \quad \mbox{and}\quad  \ubd^s E\   = \  \ubd E.$$
\end{cor}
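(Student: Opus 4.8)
The plan is to derive Corollary \ref{sgeqn} directly from the two-sided comparison in Corollary \ref{capcor} by taking logarithms and passing to the appropriate limits. Fix $s \geq n$ and a non-empty bounded $E$; since both the box dimensions and the capacities are unchanged on passing to the closure, I may assume $E$ is compact. Write $M = \mathrm{diam}\,E$. The inequality \eqref{capineq} reads
\be
c_n\, C^s_r(E)\ \leq\ N_r(E)\ \leq\ c_{n,s}(r)\, C^s_r(E),
\ee
where the right-hand constant is the genuine constant $c_{n,s}$ when $s>n$ and the slowly-growing factor $c_{n,n}\lceil\log_2(M/r)\rceil$ when $s=n$. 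Taking logarithms and dividing by $-\log r$ for small $r$ (so that $-\log r > 0$) gives
\be
\frac{\log c_n}{-\log r} + \frac{\log C^s_r(E)}{-\log r}\ \leq\ \frac{\log N_r(E)}{-\log r}\ \leq\ \frac{\log c_{n,s}(r)}{-\log r} + \frac{\log C^s_r(E)}{-\log r}.
\ee

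The key observation is that all the ``overhead'' terms coming from the constants vanish as $r \to 0$. On the left, $\log c_n$ is a fixed constant, so $\log c_n / (-\log r) \to 0$. On the right, when $s>n$ the term $\log c_{n,s}/(-\log r) \to 0$ for the same reason; when $s=n$ the factor is logarithmic in $1/r$, so its logarithm is $O(\log\log(1/r))$, and dividing by $-\log r$ still gives a quantity tending to $0$. Hence in every case the outer terms are negligible and the three ratios
\be
\frac{\log C^s_r(E)}{-\log r}, \qquad \frac{\log N_r(E)}{-\log r}
\ee
differ by a quantity tending to $0$ as $r \to 0$.

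From here I would take lower and upper limits as $r \to 0$ through the chain of inequalities. Because the discrepancy between $\log C^s_r(E)/(-\log r)$ and $\log N_r(E)/(-\log r)$ tends to $0$, their lower limits coincide and their upper limits coincide. By the definitions \eqref{dimpro} of $\lbd^s E$ and $\ubd^s E$ and \eqref{boxdims} of $\lbd E$ and $\ubd E$, this yields $\lbd^s E = \lbd E$ and $\ubd^s E = \ubd E$ simultaneously, completing the proof. The only point requiring a moment's care is the $s=n$ case, where I must confirm that the logarithmic correction factor really is swallowed under division by $-\log r$; this is the one spot where the two branches of \eqref{capineq} behave differently, but the estimate $\log\lceil\log_2(M/r)\rceil = o(-\log r)$ disposes of it cleanly, so I do not anticipate any genuine obstacle.
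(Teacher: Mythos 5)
Your argument is correct and is exactly the route the paper takes: the paper's proof of Corollary \ref{sgeqn} simply cites \eqref{capineq} together with the definitions \eqref{boxdims} and \eqref{dimpro}, and what you have written is the routine verification (taking logarithms, dividing by $-\log r$, and noting that the constant and $O(\log\log(1/r))$ correction terms vanish) that the paper leaves implicit. No issues.
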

\begin{proof}
This follows from \eqref{capineq}   and the definitions of box dimensions \eqref{boxdims} and of dimension profiles \eqref{dimpro}.
\end{proof}

\section{Proofs of the projection results}
\setcounter{equation}{0}
\setcounter{theo}{0}
In this section we prove parts (ii) and (iii) of the theorems stated in Section \ref{mainres} concerning the lower and upper box dimensions of projections. Parts (iv) on packing dimensions will follow from the relationships between  packing dimension and upper box dimension and their dimension profiles which will be discussed in Section \ref{Secpac}.

\subsection{Proofs of Theorems \ref{mainA} and \ref{mainB} parts (ii) and (iii)}\label{sec3.1}

The upper bound for the dimensions of projections onto subspaces is an easy consequence of the way that the kernels behave under projections together with the relationship between box dimensions and capacities from Lemma \ref{potbound}.
\medskip

\noindent{\it Proof of Theorem \ref{mainA} (ii) and (iii) (inequalities).}
It is enough to obtain the upper bound when $E$ is compact.
Let $ V\in G(n,m)$ and  $r>0$. Since $\pi_V$ does not increase distances, 
\begin{align*}
 \phi_r^m(\pi_V(x)-\pi_V(y)) & =  \min\Big\{ 1, \Big(\frac{r}{|\pi_V(x)-\pi_V(y)|}\Big)^m\Big\}\\
& \geq\  \min\Big\{ 1, \Big(\frac{r}{|x-y|}\Big)^m\Big\}\
 =\  \phi_{r}^{m}(x-y)\qquad\qquad (x, y \in E).
\end{align*}
For each $r>0$ we may, by Lemma \ref{equilib}, find a measure $\mu  \in {\mathcal M}(E)$ such that for all $x\in E$
$$\frac{1}{C^{m}_{r }(E)} \leq \int \phi_{r}^{m}(x-y)d\mu  (y)\ 
 \leq \ \int \phi_r^m(\pi_V(x)-\pi_V(y))d\mu (y)\ 
=\ \int \phi_r^m(\pi_V(x)-w)d\mu_V (w),$$
where  $\mu_V  \in {\mathcal M}(\pve)$ is  the image of the measure $\mu$ under $\pi_V$, defined by $\int g(w) d\mu_V (w) =  \int g(\pi_Vx) d\mu(x)$ for continuous $g:V\to\bbbr$ and by extension. Then for each $z=\pi_V(x) \in \pve$,
$$ \int \phi_r^m(z-w)d\mu_V (w) \ \geq\  \frac{1}{C^{m}_{r }(E)}.$$ 
By Lemma \ref{potbound} 
$$N_r(\pve)\ \leq\   c_{m,m} \lceil\log ({\rm diam}(\pve) / r)+1\rceil C^{m}_{r}(E),$$
so 
$$\frac{\log  N_r(\pve)}{-\log r}
\   \leq\  \frac{\log\big(c_{m,m} \lceil\log ({\rm diam}(\pve) / r)+1\rceil\big)}{-\log r}
\ + \ \frac{\log C^{m}_{r}(E)}{-\ \log r}.$$
Taking lower and upper  limits as $r\searrow 0$, we conclude that $\lbd \pi_V E   \leq    \lbd^m E$ and $\ubd \pi_V E   \leq    \ubd^m E$ for all $V\in G(n,m)$.
\hfill$\Box$
\medskip

Note that a similar argument shows that $ \lbd f(E) \  \leq \   \lbd^m E$ for every Lipschitz map $f: E \to \mathbb{R}^m$.

Almost sure equality in Theorem \ref{mainA}(ii) and (iii) is more or less a particular case of the corresponding parts of Theorem \ref{mainB} so we combine the proofs. We first need a lemma to estimate the measure of the subspaces $V$ onto  which the projection of two given points are close to each other. We assume that the Grassmanian $G(n,m)$ is equipped with some natural locally $m(n-m)$-dimensional metric $d$, and ${\mathcal H}^t$ denotes $t$-dimensional Hausdorff measure on $G(n,m)$, defined with respect to this metric.

\begin{lem}\label{compl}
(a) There is a number $a_{n,m}>0$ depending only on $n$ and $m$ such that 
\be
\phi_r^m (x-y)\ \leq \ \gamma_{n,m}\big\{V: |\pi_Vx-\pi_Vy| \leq r\big\}\ \leq\ a_{n,m}\,\phi_r^m (x-y)
\qquad (x,y \in \rn, r>0).\label{bound}
\ee

(b) Let $0<s\leq m$ and let $K \subset  G(n,m)$ be a Borel set with Hausdorff measure ${\mathcal H}^{m(n-m) - (m-s)} (K)  >  0$. Then there is a Borel measure $\tau$ supported by $K$ with $\tau(K)>0$ and a number  $a_K>0$ such that 
\be
\tau\big\{V: |\pi_Vx-\pi_Vy| \leq r\big\}\ \leq\ a_K\,\phi_r^s (x-y)
\qquad (x,y \in \rn, r>0).\label{boundnu}
\ee

\end{lem}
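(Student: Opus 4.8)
The plan is to prove the two-sided estimate in \eqref{bound} by a direct computation with the invariant measure $\gamma_{n,m}$. The key observation is that the quantity $\gamma_{n,m}\{V:|\pi_Vx-\pi_Vy|\leq r\}$ depends only on $|x-y|$ by the rotational invariance of $\gamma_{n,m}$, so I may set $u=x-y$ and study $\gamma_{n,m}\{V:|\pi_Vu|\leq r\}$ as a function of $|u|$. When $|u|\leq r$ the projection can never have length exceeding $|u|\leq r$, so the probability is $1$, which matches both $\phi_r^m(u)=1$ and the upper bound $a_{n,m}\phi_r^m(u)=a_{n,m}$ (taking $a_{n,m}\geq 1$). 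For $|u|>r$, writing $e=u/|u|$, the event $\{|\pi_Ve|\leq r/|u|\}$ concerns whether a fixed unit vector projects shortly onto a random $m$-subspace; by invariance this is the same as fixing $V$ and choosing a random unit vector, so one is estimating the measure of the set of directions making a small angle (roughly $\arcsin(r/|u|)$) with the orthogonal complement $V^\perp$ of dimension $n-m$. First I would show this measure is comparable to $(r/|u|)^m$, since the relevant region is a neighbourhood of an $(n-m-1)$-sphere of angular thickness $\sim r/|u|$ in the $m$ transverse directions, giving a volume $\asymp (r/|u|)^m$. This yields \eqref{bound} with constants depending only on $n,m$. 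This part is standard (it underlies Kaufman's potential-theoretic proof of the Marstrand--Mattila theorem) and I expect it to be routine.

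\textbf{Approach to part (b).} For the sharpened kernel $\phi_r^s$ with $s\leq m$, the single invariant measure $\gamma_{n,m}$ will not suffice, and I would instead build a tailored measure $\tau$ on the exceptional set $K$ using Frostman's lemma. The plan is: since $\mathcal{H}^{m(n-m)-(m-s)}(K)>0$, Frostman's lemma provides a nonzero Borel measure $\tau$ supported on $K$ satisfying the mass bound
\be
\tau(B(V,\rho))\ \leq\ b_K\,\rho^{\,m(n-m)-(m-s)}\qquad(V\in G(n,m),\ \rho>0),\label{frostman}
\ee
for some constant $b_K$. The goal is then to deduce \eqref{boundnu} from \eqref{frostman}. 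The geometric heart of the matter is to understand the shape of the set $\{V:|\pi_Vx-\pi_Vy|\leq r\}$ inside $G(n,m)$: for fixed $x,y$ with $|x-y|=R>r$, this is essentially a neighbourhood of the submanifold of subspaces $V$ whose orthogonal complement contains the direction $e=(x-y)/R$. This exceptional submanifold has codimension $m$ in $G(n,m)$ (the $m$ conditions that $\pi_Ve=0$), hence dimension $m(n-m)-m$, and the set in \eqref{boundnu} is its $(r/R)$-neighbourhood.

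\textbf{Covering and the main estimate.} To bound $\tau$ of this neighbourhood, I would cover the relevant submanifold by $\asymp (r/R)^{-(m(n-m)-m)}$ balls of radius $r/R$, so that the $(r/R)$-neighbourhood is covered by a comparable number of balls of radius $\asymp r/R$. Applying the Frostman bound \eqref{frostman} to each such ball gives
$$
\tau\{V:|\pi_Vx-\pi_Vy|\leq r\}\ \lesssim\ (r/R)^{-(m(n-m)-m)}\cdot b_K\,(r/R)^{\,m(n-m)-(m-s)}\ =\ b_K\,(r/R)^{s},
$$
which is exactly $a_K\,\phi_r^s(x-y)$ in the range $R>r$; the case $R\leq r$ gives probability bounded by $\tau(G(n,m))\leq a_K=a_K\phi_r^s(x-y)$ as before. \textbf{The main obstacle} I anticipate is making the covering estimate uniform and rigorous: the exceptional submanifold and its neighbourhood vary with the pair $(x,y)$, so I must verify that the covering number $\asymp (r/R)^{-(m(n-m)-m)}$ and the implied constants can be taken independent of $x,y$ (depending only on $n,m$ and, through $b_K$, on $K$). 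This requires controlling the local geometry of $G(n,m)$ uniformly — showing that the ``distance to the exceptional set'' behaves like $|\pi_Ve|/R$ up to constants across the compact manifold $G(n,m)$ — which is where the careful work lies, but is manageable by compactness and the invariance used in part (a).
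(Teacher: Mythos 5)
Your proposal is correct and follows essentially the same route as the paper: part (a) via the standard spherical tube/cap estimate for $\gamma_{n,m}\{V:|\pi_V e|\le\rho\}$, and part (b) via Frostman's lemma with exponent $m(n-m)-(m-s)$ followed by a geometric estimate. The only difference is that you sketch the geometric step in (b) explicitly as a covering of the $\rho$-neighbourhood of the codimension-$m$ submanifold $\{V: e\in V^\perp\}$, whereas the paper simply cites Mattila for this; your exponent bookkeeping $(r/R)^{-(m(n-m)-m)}\cdot(r/R)^{m(n-m)-(m-s)}=(r/R)^s$ is right, and the uniformity issue you flag is genuine but handled by invariance and compactness exactly as you say.
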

\begin{proof}
(a) Note that  $\phi^m_r(x)$ is comparable to the proportion of the subspaces $V\in G(n,m)$ for which the  $r$-neighbourhoods of  the orthogonal subspaces to $V$ contain $x$, specifically, for all $1\leq m < n$ there are numbers $a_{n,m}>0$ such that 
$$
\phi_r^m (x) \ \leq\ \gamma_{n,m}\big\{V: |\pi_Vx| \leq r\big\}\ \leq\ a_{n,m}\,\phi_r^m (x)
\qquad (x\in \rn, r>0).
$$
This standard geometrical estimate can be obtained in many ways, see for example \cite[Lemma 3.11]{Mat}. One approach is to normalise to the case where $|x|$ = 1 and then estimate the (normalised) $(n-1)$-dimensional spherical area of $S \cap \{y: {\rm dist}(y,V^\perp)\leq r\}$, that is  the intersection of the unit sphere $S$ in $\rn$ with the `tube' or `slab' of points within distance $r$ of some $(n-m)$-dimensional subspace $V^\perp$ of $\rn$. Linearity then gives \eqref{bound}.

(b) By Frostman's Lemma, see \cite{Mat,Mat2}, there is a Borel probability measure $\tau$ supported on a compact subset of $K$ and $a>0$ such that 
\be\label{frost}
\tau (B_G(V,\rho)) \leq a\rho^{m(n-m) - (m-s)}\qquad (x\in \rn, \rho >0),
\ee
where $B_G(V,\rho)$ denotes the ball in $G(n,m)$ of centre $V$ and radius $\rho$ with respect to the metric $d$.
This ensures that the subspaces in $K$ cannot be too densely concentrated, and a geometrical argument gives
\be
 \tau\big\{V: |\pi_Vx| \leq r\big\}\ \leq\ a_K\,\phi_r^s (x)
\qquad (x\in \rn, r>0)
\ee
for some $a_K>0$, see \cite{Mat4} or \cite[(5.12)]{Mat2} for more details. 
\end{proof}

\noindent{\it Proof of Theorem \ref{mainA} (a.s. equality) and Theorem \ref{mainB}, parts (ii) and (iii)}. 
As before we may take $E$ to be compact. Let
$$K = \{V \in  G(n,m) : \ubd \pi_V E \,  < \,   \ubd^s E\};$$
then $K$ is a Borel set.
Suppose, for a contradiction, that ${\mathcal H}^{m(n-m) - (m-s)} (K)  >  0$.   By Lemma \ref{compl}(b) there is a measure $\tau$ supported by $K$ with $\tau(K)>0$ and satisfying \eqref{boundnu}.
For  $\mu  \in {\mathcal M}(E)$ and $V\in G(n,m)$, write $\mu_V$ for the projection of $\mu$ onto $V$ defined by 
$\int f(w) d\mu_V (w) = \int f(\pi_V(x)) d\mu(x)$ for continuous $f$ on $V$ and by extension. Using Fubini's theorem, 
\begin{align}
\int  (\mu_V \times \mu_V)&\big\{(w, z)\in V\times V:  |w-z| \leq r\big\}d\tau(V) \nonumber\\
&=\ \int (\mu \times \mu)\big\{(x, y):  |\pi_V x -\pi_V y| \leq r\big\}d\tau(V)\nonumber\\
&=\ \int \int\tau\big\{V\in G(n,m):  |\pi_V x -\pi_V y| \leq r\big\}d\mu(x)d\mu(y)\nonumber\\
&\leq \ a_K \int\int \phi_r^s (x-y)d\mu(x)d\mu(y)\label{pipi}
\end{align}
by \eqref{boundnu}.
If  $ \ubd^s E >t'>t>0 $ then $C_{r_k}^s(E) \geq {r_k}^{-t'}$ for a sequence $r_k \searrow 0$, where we may assume that $0< r_k \leq 2^{-k}$ for all $k$. Thus for each $k$ there is a measure $\mu^k  \in {\mathcal M}(E)$ such that 
$$\int\int \phi_{r_k}^s(x-y)d\mu^k(x)d\mu^k(y)\leq {r_k}^{ t'}.$$
Applying \eqref{pipi} to  each $\mu^k$ and summing over $k$,
\begin{align*}
\int\Big(\sum_{k=1}^\infty r_k^{- t}  & (\mu_V^k \times \mu_V^k)\big\{(w, z)\in V\times V:  |w-z| \leq r_k\big\}\Big)d\tau(V)\\
   &\leq\ a_K\sum_{k=1}^\infty r_k^{- t}\int\int \phi_{r_k}^s (x-y)d\mu^k(x)d\mu^k(y)\\
  &\leq\ a_K\sum_{k=1}^\infty r_k^{ (t'-t)} \ \leq \ a_K\sum_{k=1}^\infty 2^{-k(t'-t)}\ <\  \infty.
\end{align*}
Thus, for $\tau$-almost all $V$  there is a number $M_V< \infty$ such that 
$$ (\mu_V^k \times \mu_V^k)\big\{(w, z)\in V\times V:  |w-z| \leq r_k\big\}\leq M_V r_k^{ t}$$
for all $k$.
For such $V$, Lemma \ref{genbound} implies that 
$$N_{r_k} (\pve )\ \geq\  c_m M_V^{-1}  r_k^{- t}$$
 for all $k$, as the projected measures $\mu_V^k$ are supported by $\pve\subset V$. 
Hence $\varlimsup_{r\to 0} \log N_{r}(\pve)/-\log r \geq  t$. This is so for all $t< \ubd^s E$, so  
$\ubd \pve \  \geq\    \ubd^s E$ for $\tau$-almost all $V\in G(n,m)$, contradicting that $\tau (K) >0$.

The inequality for the lower dimensions for almost all $V$ follows in a similar manner, noting that it is enough to take  $r =2^{-k}, k\in \mathbb{N}$ when considering the limits as $r\searrow 0$ in the definitions of lower box dimension and lower box dimension profiles.

Thus we have proved Theorem \ref{mainB}(ii) and (iii).  Almost sure equality in Theorem \ref{mainA}(ii) and (iii) follows in exactly the same way by taking $s=m$, replacing  $\tau$ by the restriction of  $\gamma_{n,m}$ to $K$ and using \eqref{bound} at \eqref{pipi} to get a similar contradiction if $\gamma_{n,m}(K)>0$. 
\hfill$\Box$
\medskip

We remark that in the above proof we reached a contradiction to the fact that ${\mathcal H}^{m(n-m) - (m-s)} (K)  >  0$. Thus a slightly stronger conclusion in terms of measures is valid, namely that
$${\mathcal H}^{m(n-m) - (m-s)} \{V \in  G(n,m) : \ubd \pi_V E \,  < \,   \ubd^s E\}\ =\ 0.$$

\subsection{Proof of Theorem \ref{mainC} parts (ii) and (iii)}
\label{fouriercase}

Theorem \ref{mainC} gives upper bounds for the size of the exceptional directions for box dimensions of projections in terms of 
$\lbd^s E$ or $\ubd^s$ when $s\geq m$. We use a Fourier transform approach, analogously to the Hausdorff dimension case stated in Theorem \ref{mainC}(i), see \cite{Fa1}. 

We define the {\it Fourier transform}  of a function $f\in L^1(\bbbr^n)$ and a finite measure $\mu$ on $\bbbr^n$ by
$${\widehat f}(\xi)= \int f(x) {\rm e}^{{\rm i}\,\xi\cdot x} dx,\qquad 
{\widehat \mu}(\xi)= \int {\rm e}^{ {\rm i}\,\xi\cdot x} d\mu(x)\qquad (\xi \in \rn),$$
with the definitions extending to distributions in the usual way.

Fourier transforms of radially symmetric functions can be expressed as integrals against Bessel functions, see \cite[Section 3.3]{Mat2}, and in particular, for $s>n, r>0$, the kernels $\phi_r^s$ on $\bbbr^n$ transform as distributions to
$$\widehat{ \phi_r^s}(\xi)\ = \ c_n s |\xi|^{-n-1+s}\, r^s\int_{r|\xi|}^\infty J_{n/2}(u)\, u^{n/2 - s-1} du
\qquad (\xi \in \bbbr^n, r>0),$$
where $J_{n/2}$ is the Bessel function of order $n/2$ and $c_n$ depends only on $n$ (this form  follows from integrating the usual radial transform expression by parts). However, this oscillating transform is difficult to work with, so we introduce an alternative kernel $\psi_r^s$ that is equivalent to $\phi_r^s$ and which has strictly positive Fourier transform. Thus for $0<s<n$ and $ r>0$ we define $ \psi_r^s:\bbbr^n \to \bbbr^+$ by the convolution
\be\label{kerpsi}
 \psi_r^s(x)\  :=\ \big( |\cdot |^{-s} \ast {\rm e}\big)\Big(\frac{x}{r}\Big)\ = \int  |y |^{-s}{\rm e}\Big(\frac{x}{r}-y\Big)dy
\ee
where for convenience we  write 
$$
 {\rm e}(x)\ :=\ \exp\Big(-\frac{1}{2}|x|^2\Big)\quad\qquad (x\in \bbbr^n),\label{edef}
$$
 and also 
$$
 {\rm e}_r(x)\ :=\ {\rm e}\Big(\frac{x}{r}\Big) =\ \exp\Big(-\frac{1}{2}\Big|\frac{x}{r}\Big|^2\Big)\quad\qquad (x\in \bbbr^n, r>0).$$
In particular
\be\label{r1}
 \psi_r^s(x)\ \equiv\  \psi_1^s\Big(\frac{x}{r}\Big).
\ee
The following lemma summarises the key properties of $ \psi_r^s$.

\begin{lem}\label{psiprops}
For $0<s<n$  let $ \psi_r^s$ be  as in \eqref{kerpsi}. Then

{\rm (a)} 
there are constants $c_1,c_2 > 0$ depending only on $n$ and $s$  such that 
\be\label{comp}
c_1 \psi_r^s(x)\ \leq \ \phi_r^s(x)\ \leq\  c_2 \psi_r^{s}(x)
\qquad (x\in \bbbr^n, r>0);
\ee

{\rm (b)}
there is a constant $c_3 $ depending only on $n$ and $s$  such that 
\be\label{psitran}
\widehat{ \psi_r^s}(\xi)\ =\ c_3 r^s|\xi|^{s-n} {\rm e}(r\xi)\qquad (\xi\in \bbbr^n, r>0).
\ee
\end{lem}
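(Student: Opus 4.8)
The plan is to prove the two properties of $\psi_r^s$ separately, starting with the scaling reduction. By the identity \eqref{r1}, namely $\psi_r^s(x) = \psi_1^s(x/r)$, together with the obvious scaling $\phi_r^s(x) = \phi_1^s(x/r)$, the comparison \eqref{comp} for general $r$ follows immediately from the case $r=1$. So for part (a) I would first reduce to showing $c_1 \psi_1^s(x) \leq \phi_1^s(x) \leq c_2 \psi_1^s(x)$ for all $x \in \bbbr^n$, with constants depending only on $n$ and $s$. Recall $\phi_1^s(x) = \min\{1, |x|^{-s}\}$, and $\psi_1^s(x) = \int |y|^{-s}\,{\rm e}(x - y)\,dy$ is the convolution of the Riesz kernel $|\cdot|^{-s}$ with a fixed Gaussian.

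For the comparison itself, I would estimate $\psi_1^s(x)$ by splitting the convolution integral according to the size of $|x|$. For large $|x|$, the Gaussian ${\rm e}(x-y)$ concentrates near $y = x$, where $|y|^{-s} \approx |x|^{-s}$, so $\psi_1^s(x) \asymp |x|^{-s}$; more carefully one bounds the contribution from $|y| \leq |x|/2$ and $|y| \geq 2|x|$ (where the Gaussian decay dominates) against the main contribution near $y \approx x$. For small $|x|$, say $|x| \leq 1$, the integral $\int |y|^{-s}\,{\rm e}(x-y)\,dy$ is bounded above and below by positive constants, since $|y|^{-s}$ is locally integrable (as $0 < s < n$) and the Gaussian is bounded with a definite lower bound on any fixed ball. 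Since $\phi_1^s(x) = 1$ for $|x| \leq 1$ and $= |x|^{-s}$ for $|x| \geq 1$, matching these regimes gives the two-sided bound \eqref{comp}. This is the routine but slightly delicate part of the argument.

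For part (b), the Fourier transform \eqref{psitran} comes from the convolution theorem: $\widehat{\psi_1^s} = \widehat{|\cdot|^{-s}} \cdot \widehat{\rm e}$. The Riesz kernel has the standard distributional transform $\widehat{|\cdot|^{-s}}(\xi) = c\,|\xi|^{s-n}$ for $0 < s < n$ (with a constant depending only on $n$ and $s$), and the Gaussian ${\rm e}(x) = \exp(-|x|^2/2)$ is its own transform up to a dimensional constant, i.e. $\widehat{\rm e}(\xi) = (2\pi)^{n/2}{\rm e}(\xi)$. Multiplying gives $\widehat{\psi_1^s}(\xi) = c_3\,|\xi|^{s-n}\,{\rm e}(\xi)$, which is strictly positive, as required. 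The general-$r$ case then follows from \eqref{r1} and the scaling rule for Fourier transforms: the dilation $x \mapsto x/r$ produces the factors $r^s$ (from the $|\xi|^{s-n}$ homogeneity combined with the $r^n$ Jacobian, $n + (s-n) = s$) and replaces ${\rm e}(\xi)$ by ${\rm e}(r\xi)$.

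The main obstacle is the lower bound in part (a), specifically ensuring that $\psi_1^s(x)$ does not decay faster than $|x|^{-s}$ for large $|x|$; this requires isolating the near-diagonal contribution $\int_{|y-x| \leq 1} |y|^{-s}\,{\rm e}(x-y)\,dy \gtrsim |x|^{-s}$ and checking it is not cancelled or overwhelmed. The upper bound and the Fourier computation are standard once the convolution structure is exploited, the only care being to track that all constants depend solely on $n$ and $s$ and to handle the Riesz transform at the distributional level since $|\cdot|^{-s}$ is not integrable at infinity.
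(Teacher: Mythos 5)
Your proposal is correct and follows essentially the same route as the paper: reduce to $r=1$ via \eqref{r1}, bound $\psi_1^s$ below by the near-diagonal contribution $\int_{|x-y|\le 1}|y|^{-s}\,{\rm e}(x-y)\,dy$ and above by splitting the convolution integral according to $|y|$ versus $|x|$ (the paper replaces the Gaussian by a polynomial majorant $(1+|x|)^{-M}$ for this step, a cosmetic difference), then obtain (b) from the distributional convolution theorem for the Riesz kernel and the self-transform of the Gaussian, with the factor $r^s$ coming from the same $n+(s-n)=s$ scaling count. No gaps.
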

\begin{proof}
{\rm (a)}  By \eqref{r1}  it is enough to establish \eqref{comp} when $r=1$.  
Then
\begin{eqnarray}
 \psi_1^s(x)   &= &\int  |y |^{-s}{\rm e}(x-y)dy \label{intcomp1}\\
   &\geq & c\int  |y |^{-s}\chi_{B(0,1)}(x-y)dy \ =: \ c\, J(x) \nonumber
\qquad (x\in \bbbr^n),
\end{eqnarray}
where $c= \exp(-\frac{1}{2})$ and  $\chi_{B(0,1)}$ is the indicator function of the unit ball. By obvious estimates, writing  $v_n$ for the volume of $B(0,1)$, if $|x| \leq 1$ then $J(x) \geq 2^{-s}v_n$ and if $|x| > 1$ then $J(x) \geq (2|x|)^{-s}v_n$. The right-hand inequality of \eqref{comp} follows for some $c_2>0$ when $r=1$ and thus for all $r>0$.

For the left-hand inequality, fixing $M>n$, there
is a constant $c>0$ such that ${\rm e}(x) \leq c\big(1+ |x|\big)^{-M}$ for all $ x\in \bbbr^n$, so from \eqref{intcomp1},
\be
 \psi_1^s(x)  \ \leq \  c \int_{\bbbr^n}  |y |^{-s}\big(1+ |x-y|\big)^{-M} dy\label{intcomp}
\qquad (x\in \bbbr^n).
\ee
Splitting the domain of integration of \eqref{intcomp} into regions $|y|\leq 1$ and $|y|> 1$ easily shows that the integral is bounded. Then splitting the domain into regions $|y|\leq \frac{1}{2}|x|$ and $|y|> \frac{1}{2}|x|$ gives upper bounds of orders $O(|x|^{n-s-M})$ and $O(|x|^{-s})$ respectively, so a bound of $O(|x|^{-s})$ overall. Thus the left-hand inequality of \eqref{comp} follows for a suitable $c_1$ when  $r=1$ and so for all $r>0$.

\medskip

{\rm (b)} Note that $\widehat{ |\cdot |^{-s}} =c|\cdot |^{s-n}$ in the distributional sense, where $c $ depends only on $n$ and $s$, see \cite[Theorem 3.6]{Mat2}, and  that ${\widehat {\rm e}}(\xi)\ = (2\pi)^{n/2}{\rm e}(\xi)$. Using the convolution theorem we would hope that
$$\widehat{\psi_1^s}(\xi)\ =\ \big(\widehat{|\cdot |^{-s} \ast  {\rm e}}\big)(\xi)\ 
=\   \widehat{ |\cdot |^{-s}}(\xi)\, \widehat{ {\rm e}}(\xi)\ = \ (2\pi)^{n/2}c\, |\xi |^{s-n} {\rm e}(\xi); 
$$
 the validity of this is justified in \cite[Lemma 3.9]{Mat2}. By scaling, the Fourier transform of $\psi_r^s$ for all $r>0$ is given by \eqref{psitran}, where $c_3 = (2\pi)^{n/2}c$.
\end{proof}

We now express energies with respect to the kernel $\psi_r^s$ in terms of Fourier transforms.

\begin{prop}
Let $0<s<n$  and $\mu  \in {\mathcal M}(E)$. Then there are constants $c_{4}, c_5$ and $c_6 $  depending only on $n$ and $s$ such that 

\begin{eqnarray}
\int\int \psi_r^s(x-y)d\mu(x)d\mu(y)    
&= &c_{4} \int \widehat{\psi_r^s}(\xi)|\widehat{\mu}(\xi)|^2d\xi\label{ften1}\\
&= &c_{5} r^s  \int |\xi|^{s-n}{\rm e}(r\xi)|\widehat{\mu}(\xi)|^2 d\xi \label{ften2}
\end{eqnarray}
and
\be
\int\int {\rm e}_r(x-y)d\mu(x)d\mu(y)\  =\  c_6\, r^n  \int {\rm e}(r\xi)|\widehat{\mu}(\xi)|^2 d\xi. \label{ften3}
\ee
\end{prop}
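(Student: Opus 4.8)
The plan is to prove a single Parseval-type identity and then specialise it to the two kernels. First I would record the general fact that for a real-valued kernel $K$ on $\bbbr^n$ and a probability measure $\mu$,
$$\int\int K(x-y)\,d\mu(x)\,d\mu(y)\ =\ (2\pi)^{-n}\int \widehat K(\xi)\,|\widehat\mu(\xi)|^2\,d\xi.$$
To obtain this I substitute the Fourier inversion formula $K(z)=(2\pi)^{-n}\int\widehat K(\xi)\,{\rm e}^{-{\rm i}\,\xi\cdot z}\,d\xi$ into the double integral and interchange the order of integration by Fubini. Since $\mu$ is real, $\int {\rm e}^{-{\rm i}\,\xi\cdot x}\,d\mu(x)=\overline{\widehat\mu(\xi)}$ while $\int {\rm e}^{{\rm i}\,\xi\cdot y}\,d\mu(y)=\widehat\mu(\xi)$, so the $\mu\times\mu$ integration collapses to the factor $\overline{\widehat\mu(\xi)}\,\widehat\mu(\xi)=|\widehat\mu(\xi)|^2$, giving the identity with $c_{4}=(2\pi)^{-n}$.

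Taking $K=\psi_r^s$ in this identity gives \eqref{ften1} at once. Then I substitute the explicit transform $\widehat{\psi_r^s}(\xi)=c_3 r^s|\xi|^{s-n}{\rm e}(r\xi)$ supplied by Lemma \ref{psiprops}(b) to obtain \eqref{ften2} with $c_5=c_3(2\pi)^{-n}$. For \eqref{ften3} I take $K={\rm e}_r$; here the computation is unproblematic since ${\rm e}_r$ is a Schwartz function, and its transform follows from ${\widehat{\rm e}}(\xi)=(2\pi)^{n/2}{\rm e}(\xi)$ (noted in the proof of Lemma \ref{psiprops}) together with the scaling rule $\widehat{{\rm e}_r}(\xi)=r^n\widehat{\rm e}(r\xi)=(2\pi)^{n/2}r^n{\rm e}(r\xi)$, yielding \eqref{ften3} with $c_6=(2\pi)^{-n/2}$.

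The one genuinely delicate point is the rigorous justification of the general identity when $K=\psi_r^s$, because $\psi_r^s$ behaves like $|x|^{-s}$ near the origin and its transform carries the singular factor $|\xi|^{s-n}$, so neither function lies in $L^1\cap L^2$ and the inversion and convolution theorems must be read distributionally. I would handle this exactly as in the proof of Lemma \ref{psiprops}(b): exploit the convolution structure $\psi_1^s=|\cdot|^{-s}\ast{\rm e}$ from the definition \eqref{kerpsi}, reducing to the known distributional transforms of the Riesz kernel and the Gaussian, and invoke \cite[Lemma 3.9]{Mat2} to legitimise the convolution theorem in this setting, before passing to general $r$ via the scaling \eqref{r1}. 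Convergence of the right-hand integrals is not an issue: $\widehat\mu$ is bounded, the factor $|\xi|^{s-n}$ is integrable near the origin since $s-n>-n$, and the Gaussian ${\rm e}(r\xi)$ forces rapid decay at infinity, so all three integrals on the right converge absolutely.
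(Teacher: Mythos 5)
Your proof is correct, and it arrives at the same Parseval-type identity as the paper, but by a genuinely more direct route. The paper justifies \eqref{ften1} by mollifying the measure, i.e.\ working with $\mu\ast\delta_\epsilon$ for an approximate identity $\{\delta_\epsilon\}_{\epsilon>0}$ and following the Riesz-kernel argument of \cite[Theorem 3.10]{Mat2} verbatim; that detour is forced for the bare kernel $|\cdot|^{-s}$, whose transform $c|\cdot|^{s-n}$ fails to be integrable at infinity. You instead substitute the inversion formula for the kernel into the double integral and apply Fubini, and this shortcut is legitimate here precisely because the Gaussian factor makes $\widehat{\psi_r^s}(\xi)=c_3r^s|\xi|^{s-n}{\rm e}(r\xi)$ a nonnegative function in $L^1(\rn)$: the triple integral then converges absolutely (since $\mu$ is a probability measure), and the pointwise inversion formula for $\psi_r^s$ holds because the inverse transform of an $L^1$ function is continuous and agrees, as a tempered distribution, with the continuous bounded function $\psi_r^s$. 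One small recalibration: the ``genuinely delicate point'' in your argument is not really the distributional convolution theorem --- that was already discharged in Lemma \ref{psiprops}(b) when \eqref{psitran} was established --- but rather the pointwise validity of inversion just described; your closing observation that the right-hand integrals converge absolutely is exactly the ingredient that settles this, so state that step explicitly rather than deferring to \cite[Lemma 3.9]{Mat2} a second time. The constants $c_4=(2\pi)^{-n}$, $c_5=c_3(2\pi)^{-n}$, $c_6=(2\pi)^{-n/2}$ and the treatment of the Schwartz case \eqref{ften3} are all as they should be, so what your approach buys is a self-contained proof that avoids the approximate-identity limit argument entirely, at the mild cost of having to invoke the $L^1$ inversion theorem for $\psi_r^s$.
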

\begin{proof}
Intuitively \eqref{ften1} follows by applying Parseval's formula and the convolution formula. Justification of this requires some care, by first working with approximations to $\mu$ given by $\mu\ast \delta_\epsilon$ where $\{\delta_\epsilon\}_{\epsilon>0}$ is an approximate identity. However, the proof follows exactly that for the Riesz kernel  $|\cdot|^{-s}$ given, for example,  in \cite[Theorem 3.10]{Mat2}. Equation \eqref{ften2} then follows from \eqref{psitran}. The identity  \eqref{ften3} follows in a similar way.
\end{proof} 

For each $V\in G(n,m)$ we may decompose $x\in \bbbr^n$  as $x= x_V+x_{V^\perp}$, where $x_V\in V$ and $x_{V^\perp}\in V^\perp$, and where appropriate we will write $x$ as $(x_V,x_{V^\perp})$ in the obvious way. Given $\mu  \in {\mathcal M}(E)$ we define Radon measures $\nu_V$ on each $V\in G(n,m)$ by
\be\label{nudef}
\int_{\rn} f(x_V) d\nu_V(x_V)\ =\  \int_V f(x_V) {\rm e}(x_{V^\perp}) d\mu(x_V,x_{V^\perp}) 
\ee 
for all continuous $f$ on $V$ and by extension.  Then $\nu_V$ is a weighted projection of $\mu$ onto $V$ and the support of $\nu_V$ is the projection of the support of $\mu$ onto $V$, in particular ${\rm spt}(\nu_V) \subset \pi_V E$. The Fourier transform of $\nu_V$ on $V$ is given for $\xi_V \in V$ by

\begin{eqnarray}
\widehat{\nu_V}(\xi_V)\ 
&=& \int_{\bbbr^n} \exp({\rm i}\,x_V\!\cdot\!\xi_V)  {\rm e}(x_{V^\perp})d\mu(x_V,x_{V^\perp})\nonumber\\
&=&(2\pi)^{-(n-m)/2}\int_{\bbbr^n}  \int_{V^\perp}\exp({\rm i}\, x_V\!\cdot\!\xi_V) \exp({\rm i}\, x_{V^\perp}\!\cdot\!\xi_{V^\perp}) {\rm e}(\xi_{V^\perp}) d\xi_{V^\perp}d\mu(x_V,x_{V^\perp})\nonumber\\
&=& (2\pi)^{-(n-m)/2} \int_{V^\perp} \widehat{\mu}(\xi_V,\xi_{V^\perp}){\rm e}(\xi_{V^\perp})d\xi_{V^\perp}\label{nuvtran},
\end{eqnarray}
using the transform of the symmetric ${\rm e}(\xi_{V^\perp})$ and Fubini's theorem.

Next we relate the transforms of the $\nu_V$ to that of $\mu$ for each $V$.

\begin{lem}\label{est1}
Let $0<\gamma<n$, let $\mu  \in {\mathcal M}(E)$ and let $\nu_V$ be defined by \eqref{nudef}. Then there is a constant $c_7$ depending only on $n$ and $m$ such that for all $V\in G(n,m)$ and $0<r<\frac{1}{2}$,
\be\label{vineq}
\int_V  |\widehat{\nu_V}(\xi_V)|^2 {\rm e}(r\xi_{V})d\xi_V
\ \leq\ c_7   \int_{\bbbr^n} |\widehat{\mu}(\xi) |^2 {\rm e}(r\xi)\exp(\textstyle{-\frac{1}{4}}|\xi_{V^\perp}|^2) d\xi.
\ee
\end{lem}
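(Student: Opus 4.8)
The plan is to start from the explicit formula \eqref{nuvtran} for the Fourier transform of the weighted projection, namely
$$
\widehat{\nu_V}(\xi_V)\ =\ (2\pi)^{-(n-m)/2}\int_{V^\perp}\widehat{\mu}(\xi_V,\xi_{V^\perp})\,{\rm e}(\xi_{V^\perp})\,d\xi_{V^\perp},
$$
take absolute values squared, and apply the Cauchy--Schwarz inequality to the integral over $V^\perp$. The key device is to split the Gaussian weight symmetrically as ${\rm e}(\xi_{V^\perp})=\exp(-\tfrac14|\xi_{V^\perp}|^2)\cdot\exp(-\tfrac14|\xi_{V^\perp}|^2)$, attaching one factor to $\widehat{\mu}$ and keeping the other on its own. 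Cauchy--Schwarz then gives
$$
|\widehat{\nu_V}(\xi_V)|^2\ \leq\ (2\pi)^{-(n-m)}\Big(\int_{V^\perp}\exp(-\tfrac12|\xi_{V^\perp}|^2)\,d\xi_{V^\perp}\Big)\int_{V^\perp}|\widehat{\mu}(\xi_V,\xi_{V^\perp})|^2\exp(-\tfrac12|\xi_{V^\perp}|^2)\,d\xi_{V^\perp},
$$
where the first bracket is a finite Gaussian integral over the $(n-m)$-dimensional space $V^\perp$, depending only on $n-m$ and hence on $n$ and $m$.

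Next I would multiply through by ${\rm e}(r\xi_V)$ and integrate over $\xi_V\in V$. Since the integrand is non-negative, Tonelli's theorem lets me combine the $V$ and $V^\perp$ integrations into a single integral over $\rn$ via the decomposition $\xi=(\xi_V,\xi_{V^\perp})$. At this stage the weight appearing is $\exp(-\tfrac12|\xi_{V^\perp}|^2)\,{\rm e}(r\xi_V)$, so I must convert ${\rm e}(r\xi_V)$ into the target weight ${\rm e}(r\xi)$. Using the factorisation ${\rm e}(r\xi)={\rm e}(r\xi_V)\,{\rm e}(r\xi_{V^\perp})$, I write ${\rm e}(r\xi_V)={\rm e}(r\xi)\exp(\tfrac{r^2}{2}|\xi_{V^\perp}|^2)$, which produces a compensating growth factor in the $V^\perp$ direction.

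The restriction $0<r<\tfrac12$ is exactly what is needed to absorb this growth: since $\tfrac{r^2}{2}<\tfrac18$, the combined exponent satisfies $-\tfrac12|\xi_{V^\perp}|^2+\tfrac{r^2}{2}|\xi_{V^\perp}|^2\leq-\tfrac38|\xi_{V^\perp}|^2\leq-\tfrac14|\xi_{V^\perp}|^2$, so that $\exp(-\tfrac12|\xi_{V^\perp}|^2)\,{\rm e}(r\xi_V)\leq{\rm e}(r\xi)\exp(-\tfrac14|\xi_{V^\perp}|^2)$. Substituting this bound yields precisely \eqref{vineq} with $c_7=(2\pi)^{-(n-m)/2}$, which depends only on $n$ and $m$. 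The main point requiring care is the bookkeeping of the Gaussian exponents: the symmetric $\tfrac14$--$\tfrac14$ split in Cauchy--Schwarz must leave enough decay, $\exp(-\tfrac12|\xi_{V^\perp}|^2)$, to survive the $\exp(\tfrac{r^2}{2}|\xi_{V^\perp}|^2)$ incurred when passing from ${\rm e}(r\xi_V)$ to ${\rm e}(r\xi)$, and it is this balance that dictates the hypothesis $r<\tfrac12$; everything else (applicability of Tonelli, finiteness of the auxiliary Gaussian integral) is routine because all integrands are non-negative.
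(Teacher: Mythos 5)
Your proof is correct and follows essentially the same route as the paper: Cauchy--Schwarz applied to \eqref{nuvtran} with a symmetric split of ${\rm e}(\xi_{V^\perp})$, followed by the exponent comparison $-\tfrac12|\xi_{V^\perp}|^2+\tfrac{r^2}{2}|\xi_{V^\perp}|^2\leq -\tfrac{r^2}{2}|\xi_{V^\perp}|^2-\tfrac14|\xi_{V^\perp}|^2$ (valid since $r<\tfrac12$) to convert ${\rm e}(r\xi_V)\,{\rm e}(\xi_{V^\perp})$ into ${\rm e}(r\xi)\exp(-\tfrac14|\xi_{V^\perp}|^2)$, and integration over $\xi_V$. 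The only cosmetic difference is that the paper performs the exponent manipulation pointwise before integrating in $\xi_V$ while you invoke Tonelli first; your explicit constant $c_7=(2\pi)^{-(n-m)/2}$ is also correct.
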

\begin{proof}
Applying Schwarz's inequality to \eqref{nuvtran}, for some $c_7>0$,
$$ |\widehat{\nu_V}(\xi_V)|^2 \ \leq \ c_7 \int |\widehat{\mu}(\xi_V,\xi_{V^\perp})|^2 {\rm e}(\xi_{V^\perp})d\xi_{V^\perp}. $$
Thus   for all $V\in G(n,m)$,  $\xi = (\xi_V,\xi_{V^\perp})\in \bbbr^n$ and $0<r<\frac{1}{2}$,
\begin{align*}
|\widehat{\nu_V}(\xi_V)|^2  {\rm e}(r\xi_{V})\
&\leq \ c_7 \int |\widehat{\mu}(\xi_V,\xi_{V^\perp})|^2 {\rm e}(r\xi_{V}){\rm e}(\xi_{V^\perp})d\xi_{V^\perp}\\
&= \ c_7 \int |\widehat{\mu}(\xi_V,\xi_{V^\perp})|^2  \exp(\textstyle{-\frac{1}{2}r^2|\xi_V|^2  -\frac{1}{2}}|\xi_{V^\perp}|^2 )d\xi_{V^\perp}\\
&\leq\ c_7 \int |\widehat{\mu}(\xi_V,\xi_{V^\perp})|^2  \exp\big(\textstyle{-\frac{1}{2}}r^2(|\xi_V|^2  +|\xi_{V^\perp}|^2 )  \textstyle{-\frac{1}{4}}|\xi_{V^\perp}|^2   \big)d\xi_{V^\perp}\\
&= \ c_7 \int |\widehat{\mu}(\xi_V,\xi_{V^\perp})|^2 {\rm e}(r\xi) \exp(\textstyle{-\frac{1}{4}}|\xi_{V^\perp}|^2)d\xi_{V^\perp}.
\end{align*}
Integrating with respect to  $\xi_V$ gives \eqref{vineq}.
\end{proof}

To enable us to integrate \eqref{vineq} over $V$ we need a strightforward bound for the integral of $\exp(\textstyle{-\frac{1}{4}}|\xi_{V^\perp}|^2)$.

\begin{lem}\label{angint}
Let $W$ be an analytic subset of $G(n,m)$ with ${\mathcal H}^t (W) >0$ where $0\leq (m-1)(n-m)<t< m(n-m)$. Then there exists a Borel probability measure $\tau$ supported by $W$ and a constant $c_8$ depending only on $n,m$ and $t$ such that for all $\xi \in \bbbr^n$
$$\int \exp(\textstyle{-\frac{1}{4}}|\xi_{V^\perp}|^2)d\tau(V)
\ \leq\ c_8\, |\xi|^{{(m-1)(n-m)-t}}.
$$
\end{lem}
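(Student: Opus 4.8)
The plan is to reduce the bound to a Frostman-type estimate on the set of subspaces that nearly contain the direction of $\xi$. For $\xi\neq 0$ write $\xi = R\,e$ with $R=|\xi|$ and $e=\xi/|\xi|$ a unit vector; since $\xi_{V^\perp}=\pi_{V^\perp}\xi$ depends linearly on $\xi$ we have $|\xi_{V^\perp}| = R\,|e_{V^\perp}|$, so it suffices to bound
\[
I\ :=\ \int \exp\big(-\tfrac14 R^2\,|e_{V^\perp}|^2\big)\,d\tau(V).
\]
Since $W$ is analytic with ${\mathcal H}^t(W)>0$, Frostman's Lemma (the direction valid for analytic sets, as used at \eqref{frost}) provides a Borel probability measure $\tau$ supported on a compact subset of $W$ and a constant $a>0$ with $\tau(B_G(V,\rho))\le a\,\rho^{t}$ for all $V$ and $\rho>0$. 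Everything then rests on controlling the distribution function $F(\delta):=\tau\{V:|e_{V^\perp}|\le\delta\}$.

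The heart of the argument, and the step I expect to be the main obstacle, is the geometric estimate $F(\delta)\le c\,\delta^{\,t-(m-1)(n-m)}$ for $\delta>0$, with $c$ independent of $e$. Consider $M_e:=\{V\in G(n,m):e\in V\}$. Taking $e$ as a coordinate vector identifies $M_e$ with $G(n-1,m-1)$, so $M_e$ is a compact smooth submanifold of $G(n,m)$ of dimension exactly $(m-1)(n-m)$; moreover, since the metric $d$ is $O(n)$-invariant and $O(n)$ acts transitively on unit vectors, the covering numbers of $M_e$ are the same for every $e$. Now $|e_{V^\perp}|$ is the sine of the angle between $e$ and $V$, and a short rotation argument shows that $|e_{V^\perp}|\le\delta$ forces $V$ to lie within distance $C\delta$ of $M_e$: rotating the unit vector $e_V/|e_V|\in V$ onto $e$ in the plane they span moves $V$ by $O(\delta)$ and yields a subspace containing $e$. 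Covering $M_e$ by at most $c\,\delta^{-(m-1)(n-m)}$ balls of radius $\delta$ and enlarging each to radius $(C+1)\delta$ covers $\{V:|e_{V^\perp}|\le\delta\}$; summing the Frostman bound over these balls gives the displayed estimate for small $\delta$, while for $\delta$ bounded below it holds trivially as $F\le1$. The exponent $\beta:=t-(m-1)(n-m)$ is strictly positive by the hypothesis $t>(m-1)(n-m)$.

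Finally I pass from the measure bound to the integral bound by the layer-cake formula. As the integrand takes values in $[0,1]$,
\[
I\ =\ \int_0^1 \tau\big\{V:\exp(-\tfrac14 R^2|e_{V^\perp}|^2)>\lambda\big\}\,d\lambda
\ \le\ \int_0^1 F\Big(\tfrac{2}{R}\sqrt{-\log\lambda}\Big)\,d\lambda,
\]
since $\exp(-\tfrac14R^2|e_{V^\perp}|^2)>\lambda$ is equivalent to $|e_{V^\perp}|<\tfrac{2}{R}\sqrt{-\log\lambda}$. Inserting $F(\delta)\le c\,\delta^{\beta}$ leaves $I\le c\,(2/R)^{\beta}\int_0^1(-\log\lambda)^{\beta/2}\,d\lambda$, and the remaining integral equals $\Gamma(\tfrac{\beta}{2}+1)<\infty$. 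Collecting the constants gives $I\le c_8\,R^{-\beta}=c_8\,|\xi|^{(m-1)(n-m)-t}$ with $c_8$ depending only on $n,m,t$, the case $0<|\xi|<1$ being immediate (the right-hand side then exceeds $1\ge I$ once $c_8\ge1$) and $\xi=0$ trivial. Apart from this bookkeeping, the only non-routine ingredient is the covering estimate for $M_e$ and its neighbourhood.
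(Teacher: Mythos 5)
Your proof is correct and follows essentially the same route as the paper: a Frostman measure $\tau$ on $W$, the distribution-function estimate $\tau\{V:|\xi_{V^\perp}|\le\lambda\}\le c(\lambda/|\xi|)^{t-(m-1)(n-m)}$, and then integration of the Gaussian against this bound (your layer-cake over the level sets of the integrand is the same computation as the paper's $\int_0^\infty\frac12\lambda e^{-\lambda^2/4}\,\tau\{|\xi_{V^\perp}|\le\lambda\}\,d\lambda$ after a change of variables). The one place you do more work is the distribution-function estimate itself: the paper simply cites it from Mattila (inequality (5.11) of \emph{Fourier Analysis and Hausdorff Dimension}), whereas you derive it by covering a $C\delta$-neighbourhood of the $(m-1)(n-m)$-dimensional submanifold $\{V: e\in V\}\cong G(n-1,m-1)$ with $O(\delta^{-(m-1)(n-m)})$ balls and summing the Frostman bound; this is essentially Mattila's argument and makes your write-up self-contained at the cost of having to justify the rotation/neighbourhood step carefully (including noting that $e_V=0$ only occurs when $\delta\ge 1$, where the bound is trivial).
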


\begin{proof}
A consequence of \eqref{frost} is that there exists a probability measure $\tau$ supported by $W$ and $c>0$ such that 
$$\tau\big(V\in G(n,m): |\xi_{V^\perp}| \leq \lambda\big)\ \leq\ c\bigg(\frac{\lambda}{|\xi|}\bigg)^{t-(m-1)(n-m)}\quad (\lambda>0, |\xi|>0),$$
see \cite[(5.11)]{Mat2}. Then 
\begin{eqnarray*}
\int \exp(\textstyle{-\frac{1}{4}}|\xi_{V^\perp}|^2)d\tau(V)
&=&
\int_0^\infty \textstyle{\frac{1}{2}}\lambda  \exp(\textstyle{-\frac{1}{4}}\lambda^2) \tau\big(V: |\xi_{V^\perp}| \leq \lambda\big) d\lambda\\
&\leq&
{\textstyle\frac{1}{2}}c\int_0^\infty \lambda \exp(-{\textstyle\frac{1}{4}}\lambda^2) \bigg(\frac{\lambda}{|\xi|}\bigg)^{t-(m-1)(n-m)} d\lambda\\
&=&c_8\, |\xi|^{(m-1)(n-m)-t}\qquad (\xi \in\rn),
\end{eqnarray*}
since the integral with respect to $\lambda$ is finite.
\end{proof}

\noindent{\it Proof of Theorem 1.3. }
Let $0<d<d'<\lbd^{m+\gamma} E$. Then for each   $0<r\leq\frac{1}{2}$ there is a measure $\mu^r \in {\mathcal M}(E)$ such that the energy
\be\label{energmg}
r^{m+\gamma}  \int |\xi|^{m+\gamma-n}{\rm e}(r\xi)|\widehat{\mu^r}(\xi)|^2 d\xi \ = \  c_5^{-1}\int\int \psi_r^{m+\gamma}(x-y)d\mu^r(x)d\mu^r(y)\  \ \leq\  c\,r^{d'},
 \ee
   where $c$ is independent of $r$, using \eqref{ften2}, \eqref{comp} and \eqref{dimpro}. 
Let $\nu_V^r$ be the weighted projection of $\mu^r$ onto $V$ derived from $\mu^r$ as in \eqref{nudef}. Let $W_r \subseteq G(n,m)$ be the Borel set
\be
W_r  
\ :=\ \Big\{V \in  G(n,m): r^m \int_V |\widehat{\nu_V^r}(\xi_V) |^2 {\rm e}(r\xi_{V})d\xi_V \geq r^{d-\gamma} \Big\}.\label{wr}
\ee
Let $W = \limsup_{k\to\infty}W_{2^{-k}}$.
Let $t = m(n-m)-\gamma$ so $(m-1)(n-m)-t =\gamma +m-n$. Suppose, for a contradiction, that ${\mathcal H}^t (W) >0$. By Lemma \ref{angint} there is a probability measure $\tau$ supported by $W$ satisfying 
\be\label{roteq1}
 \int \exp(\textstyle{-\frac{1}{4}}|\xi_{V^\perp}|^2)d\tau(V)\ \leq\ c_8\, |\xi|^{\gamma +m-n}.
 \ee
For convenience write $\mu^k = \mu^{2^{-k}}$ and $\nu_V^k=\nu_V^{2^{-k}}$. Then, using \eqref{vineq}, Lemma \ref{est1},  \eqref{roteq1} and \eqref{energmg},
\begin{eqnarray*}
\sum_{k=1}^\infty \tau(W_{2^{-k}})
&=& 
\sum_{k=1}^\infty \tau\Big\{V :  \int_V |\widehat{\nu_V^{k}}(\xi_V) |^2 {\rm e}(2^{-k}\xi_{V})d\xi_V \geq 2^{-k(d-m-\gamma)} \Big\} \\
&\leq& 
\sum_{k=1}^\infty \tau\Big\{V :  c_7   \int_{\bbbr^n} |\widehat{\mu^{k}}(\xi) |^2 {\rm e}(2^{-k}\xi)\exp(\textstyle{-\frac{1}{4}}|\xi_{V^\perp}|^2) d\xi \geq 2^{-k(d-m-\gamma)}  \Big\} \\
&\leq& 
\sum_{k=1}^\infty c_7\,2^{-k(\gamma+m-d)} \int_V  \int_{\bbbr^n} |\widehat{\mu^{k}}(\xi) |^2 {\rm e}(2^{-k}\xi)\exp(\textstyle{-\frac{1}{4}}|\xi_{V^\perp}|^2) d\xi d\tau (V)\\
&\leq& c_7c_8\,2^{-k(\gamma+m-d)} \int_{\bbbr^n} |\widehat{\mu^{k}}(\xi) |^2 {\rm e}(2^{-k}\xi) |\xi|^{\gamma+m-n} d\xi \\
&\leq& 
\sum_{k=1}^\infty c_6c_7c_8\,2^{-k(d'-d)}<\infty. 
\end{eqnarray*}
By the Borel-Cantelli lemma ${\mathcal H}^t (W) =0$, a contradiction, so $\hdd W\leq t$.

For all $V\notin W$, by \eqref{ften3},
\begin{eqnarray*}
(\nu_V^k \times \nu_V^k)\big\{(w,z) \in V\times V : |w-z| \leq 2^{-k}\big\}
&\leq & {\rm e}^{1/2} \int\int {\rm e}_{2^{-k}}(w-z)d\nu_V^k(w)d\nu_V^k(z)\\
&= & {\rm e}^{1/2}c_6\, 2^{-km} \int_V |\widehat{\nu_V^k}(\xi_V) |^2 {\rm e}(2^{-k}\xi_{V})d\xi_V\\
&\leq & {\rm e}^{1/2}c_6\, 2^{-k(d-\gamma)}
\end{eqnarray*}
 for all sufficiently large $k$, by \eqref{wr}.
 Since $\nu_V^{k}$ is supported by $\pi_V E$,  Lemma \ref{genbound} implies that  there is $c'>0$ such that $N_{2^{-k}}(\pi_V E) \geq c'\, 2^{k(d-\gamma)}$ for all sufficiently large $k$, so $\lbd(\pi_V E)\geq d-\gamma$, since when finding box-dimensions it is enough to consider a sequence of scales  $r=2^{-k}\, (k\in\mathbb{N})$. This is true for all $0<d<d'<\lbd^{m+\gamma} E$, so 
$\lbd(\pi_V E)\geq\lbd^{m+\gamma} E -\gamma$ for all $V\in G(n,m)$ except for a set of Hausdorff dimension at most $t$, giving Theorem \ref{mainC}(ii).

The proof of Theorem \ref{mainC}(ii) is similar, taking $0<d<d'<\ubd^{m+\gamma} E$ and summing over those $r=2^{-k}$ for which \eqref{energmg} is satisfied. 
\hfill$\Box$

\section{Packing dimensions}\label{Secpac}
\setcounter{equation}{0}
\setcounter{theo}{0}

In this section we show how the results for box-counting dimensions carry over to the packing dimensions.

Packing measures and dimensions were introduced by Taylor and Tricot \cite{Tri,TT} as a type of dual to Hausdorff measures and dimensions, see \cite{Fa,Mat} for more recent expositions. Whilst, analogously to Hausdorff dimensions, packing dimensions can be defined by first setting up packing measures,  an equivalent definition in terms of  upper box dimensions  of countable coverings of a set is often more convenient in practice. Thus for $E \subset \mathbb{R}^n$ we may define the {\it packing dimension} of $E$ by
\be\label{packdef}
\pkd E \ = \  \inf \Big\{\sup_{1\leq i<\infty} \ubd E_i : E \subset \bigcup_{i =1}^\infty E_i\Big\};
\ee
since the box dimension of a set equals that of its closure, we can assume that the sets $E_i$ in \eqref{packdef} are all compact.

It is natural to make an analogous definition of the {\it packing dimension profile} of $E \subset \mathbb{R}^n$ for $s>0$ by
\be\label{packpro}
\pkd^s E \ = \  \inf \Big\{\sup_{1\leq i<\infty} \ubd^s E_i : E \subset \bigcup_{i =1}^\infty E_i \mbox{ with each } E_i  \mbox{ compact} \Big\}.
\ee

With this definition, properties of packing dimension can be deduced from corresponding properties of upper box dimension. Thus we get an immediate analogue of Corollary \ref{sgeqn}.
\begin{cor}
Let $E\subset \mathbb{R}^n$. If $s\geq n$ then 
$$\pkd^s E\   =\   \pkd E.$$
\end{cor}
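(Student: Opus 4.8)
The plan is to observe that both $\pkd E$ and $\pkd^s E$ are defined as infima over essentially the same class of countable covers of $E$ by compact sets, and that for each individual compact piece the $s$-box dimension profile and the ordinary upper box dimension already coincide when $s\geq n$. Thus the corollary should follow immediately by applying Corollary \ref{sgeqn} piece-by-piece inside the infimum, with no genuine analytic content beyond that already established.

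First I would recall, as noted immediately after \eqref{packdef}, that the infimum defining $\pkd E$ may be restricted without loss to covers $E \subset \bigcup_{i=1}^\infty E_i$ in which every $E_i$ is compact, since $\ubd E_i = \ubd \overline{E_i}$ and passing to closures does not enlarge the cover's union beyond a set still containing $E$. This is exactly the class of covers appearing in the definition \eqref{packpro} of $\pkd^s E$, so both quantities are infima over the same family.

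Next I would invoke Corollary \ref{sgeqn}: for any non-empty compact (hence bounded) set $E_i$ and any $s\geq n$ we have $\ubd^s E_i = \ubd E_i$. Empty pieces contribute nothing to either supremum and may be discarded, so for every admissible compact cover
$$
\sup_{1\leq i<\infty}\ubd^s E_i \ =\ \sup_{1\leq i<\infty}\ubd E_i .
$$
Taking the infimum of both sides over all such covers then gives $\pkd^s E = \pkd E$, which is the assertion.

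Since the whole argument is a direct transfer through Corollary \ref{sgeqn}, there is no real obstacle to overcome. The only point requiring a little care is the bookkeeping that both infima genuinely range over the same collection of covers by compact sets; this is precisely what the remark following \eqref{packdef} secures, and it is why the definition \eqref{packpro} was stated with the compactness restriction built in.
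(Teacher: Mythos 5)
Your argument is correct and is exactly the intended one: the paper gives no explicit proof, calling the corollary an ``immediate analogue'' of Corollary \ref{sgeqn}, and the justification is precisely your observation that \eqref{packdef} and \eqref{packpro} are infima over the same family of countable compact covers, on each member of which $\ubd^s$ and $\ubd$ agree for $s\geq n$. Nothing further is needed.
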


With these definitions we can deduce the packing dimension parts (iv) of our main theorems from the corresponding upper box dimension parts (iii). For this we need the following `localisation' property.

\begin{prop}\label{goodsubset}
Let $E\subset \mathbb{R}^n$ be a Borel set such that $\pkd^s E>t$. Then there exists a non-empty compact $F\subset E$ such that $\pkd^s (F\cap U )>t $ for every open set $U$ such that $F\cap U \neq \emptyset$.
\end{prop}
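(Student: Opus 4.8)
The plan is to derive the localisation from two soft properties of the packing dimension profile, together with one genuinely substantial ingredient. The two soft properties, both immediate from the covering definition \eqref{packpro}, are \emph{monotonicity} ($A\subseteq B$ implies $\pkd^s A\le \pkd^s B$, inherited from the monotonicity of $\ubd^s$, which in turn comes from monotonicity of the capacities $C_r^s$ in \eqref{dimpro}) and \emph{countable stability}, $\pkd^s\big(\bigcup_j A_j\big)=\sup_j \pkd^s A_j$. Countable stability is proved in the usual way: the inequality $\ge$ is monotonicity, and for $\le$ one patches together, for each $j$, a cover of $A_j$ by compact sets of profile within $\varepsilon$ of $\pkd^s A_j$, obtaining a single countable compact cover of the union whose profiles do not exceed $\sup_j\pkd^s A_j+\varepsilon$. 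In particular this gives finite stability $\pkd^s(A\cup B)=\max\{\pkd^s A,\pkd^s B\}$, which is what the removal argument below uses.

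The main obstacle, and the only place where the Borel hypothesis is essential, is producing a \emph{compact} subset of $E$ that already carries profile exceeding $t$. Here I would appeal to inner regularity by compact sets: since $E$ is Borel (hence analytic) and $\pkd^s E>t$, there is a compact $F_1\subseteq E$ with $\pkd^s F_1>t$. This is the delicate step; it is the profile analogue of the standard fact that ordinary packing dimension is inner regular on analytic sets (obtained via the Joyce--Preiss subset theorem / capacitability), applied with the gauge that produces $\ubd^s$ rather than $\ubd$. Note that one cannot circumvent this by simply taking $E\setminus V$ directly, since for Borel $E$ that set need not be compact, and $\pkd^s$ is not invariant under passing to the closure.

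Once $F_1$ is in hand the construction is clean and automatically yields compactness. I would fix a countable base $\{U_i\}$ for the topology of $\mathbb{R}^n$, set $V=\bigcup\{U_i:\pkd^s(F_1\cap U_i)\le t\}$, and put $F=F_1\setminus V$. Because $V$ is open and $F_1$ is compact, $F=F_1\cap V^{\mathrm c}$ is a closed subset of a compact set, hence compact. By countable stability $\pkd^s(F_1\cap V)\le t$, so from $F_1=F\cup(F_1\cap V)$ and finite stability together with $\pkd^s F_1>t$ we get $\pkd^s F>t$; in particular $F\neq\emptyset$. Finally, given any open $U$ with $F\cap U\neq\emptyset$, choose $x\in F\cap U$ and a basic $U_i$ with $x\in U_i\subseteq U$. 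Since $x\notin V$, the definition of $V$ forces $\pkd^s(F_1\cap U_i)>t$; writing $F_1\cap U_i=(F\cap U_i)\cup(F_1\cap V\cap U_i)$, where the last piece has profile at most $\pkd^s(F_1\cap V)\le t$, finite stability gives $\pkd^s(F\cap U_i)>t$, and monotonicity gives $\pkd^s(F\cap U)\ge\pkd^s(F\cap U_i)>t$, as required.
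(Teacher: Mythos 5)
Your proof is correct and follows essentially the same route as the paper: the same removal of the basic open sets of small profile from a compact subset (using countable stability of $\pkd^s$), with the same reliance on an external inner-regularity result to produce a compact $F_1\subseteq E$ with $\pkd^s F_1>t$ --- the paper defers exactly this step to Howroyd's packing-type measures $\mathcal{P}^{s,t}$, \cite[Theorem 22]{How}. The only cosmetic difference is that you verify the local property directly from $x\notin V$ rather than by contradiction.
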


\begin{proof}
In the special case where $E$ is compact there is a  short proof is based on \cite[Lemma 2.8.1]{BP}.
Let ${\mathcal B}$ be a countable basis of open sets that intersect $E$. Let 
$$ F\ =\ E\setminus \bigcup\big\{V \in {\mathcal B}: \pkd^s (E\cap V)\leq t\big\}.$$
Then $F$ is compact and, since  $ \pkd^s$ is countably stable, $ \pkd^s F >t$ and furthermore $ \pkd^s (E\setminus F) \leq t$.

Suppose for a contradiction that  $U$ is an open set such that $F\cap U \neq \emptyset$ and  $\pkd^s (F\cap U )\leq t $. As $ {\mathcal B}$ is a basis of open sets we may find  $V\subset U$ with $V \in {\mathcal B}$ such that $F\cap V \neq \emptyset$ and  $\pkd^s (F\cap V )\leq t$. Then
$$\pkd^s (E\cap V )\ \leq\ \max\{\pkd^s (E\setminus F ), \pkd^s (F\cap V )\} \ \leq t,$$
so $E\cap V$ is disjoint from $F$ by definition of  $F$, which contradicts that $F\cap V \neq \emptyset$.

For a general Borel set $E$ with $\pkd^s E>t$ we need to find a compact subset $E'\subset E$ with $\pkd^s E'>t$ which then has a suitable subset as above.
Whilst this is intuitively natural, I am not aware of a simple direct proof from the definition \eqref{packpro} of packing dimension profiles in terms of  box dimension profiles. However the existence of such a set $E'$ is proved in \cite{How} using packing-type measures. In that paper, measures ${\mathcal P}^{s,d}$ are constructed so that $\pkd^s E = \inf\{d: {\mathcal P}^{s,d}(E) <\infty\}$. If $\pkd^s E>t$ then ${\mathcal P}^{s,t}(E) = \infty$ and \cite[Theorem 22]{How} gives a construction of a compact  $E'\subset E$ with ${\mathcal P}^{s,t}(E') = \infty$, so that $\pkd^s E'>t$. The above argument can then be applied to $E'$.
\end{proof}

With the definitions of $\pkd$ and  $\pkd^s$ we can transfer the results on projections and exceptional sets from upper box dimensions to packing dimensions.
\medskip

\noindent{\it Proof of part (iv) of Theorems \ref{mainA}, \ref{mainB} and \ref{mainC}}. 
If $t> \pkd^{s} E$ we may cover $E$ by a countable collection of compact sets $E_i$ such that $\ubd^{s} E_i <t$.  By Theorem \ref{mainA}(iii), for all $V\in G(n,m)$,
$$\ubd \pi_V( E_i)\  \leq \  \ubd^{s} E_i \ \leq\  t, $$
for all $i$. Since  $\pi_V( E)\subset \bigcup_i  \pi_V( E_i)$,  $\pkd \pi_V( E)\leq t$ by \eqref{packdef}, so as this holds for all $t> \pkd^{s} E$, the inequality in Theorem \ref{mainA}(iv) follows.

We next derive Theorem \ref{mainB}(iv) from Theorem \ref{mainB}(iii). Let $0<s\leq m$ and let $t< \pkd^s E$. By Proposition \ref{goodsubset} we may find a  non-empty compact $F\subset E$ such that for every open $U$ that intersects $F$,  $\pkd^s (F\cap U )>t $,  so in particular $\ubd^s (F\cap {\overline U} )>t $. As  $\mathbb{R}^n$ is separable, there is a countable basis $\{U_i\}_{i=1}^\infty$ of open sets that intersect $F$.  
For each $i\in \mathbb{N}$ let
\be\label{wi}
W_i\ = \ \big\{V\in G(n,m): \ubd \pi_V(F\cap {\overline U}_i)\ <\  \ubd^{s}(F\cap {\overline U}_i)\big\}.
\ee
By Theorem \ref{mainB}(iii) $\hdd W_i  \leq m(n-m) - (m-s)$ for all $i$, so writing $W= \bigcup_{i=1}^\infty  W_i$, it follows that $\hdd W  \leq m(n-m) - (m-s)$.

Let $V\notin W$. If $\{K_j\}_{j=1}^\infty$ is any cover of the compact set $\pi_V(F)$ by a countable collection of compact sets,  Baire's category theorem implies that there is an index  $k$ and an open set $U$ such that $\emptyset \neq \pi_V(F)\cap U\subset \pi_V(F)\cap K_k$. There is some $U_i$ such that $\pi_V(F \cap U_i)\subset \pi_V(F)\cap U$, so in particular 
$$\ubd (\pi_V(F)\cap K_k)\  \geq\ \ubd (\pi_V(F)\cap {\overline U})\  \geq \ \ubd \pi_V(F \cap {\overline U}_i)\  \geq \   \ubd^{s}(F\cap {\overline U}_i)\ >\ t$$
as $V\notin W_i$. Thus $\pkd \pi_V E \geq \pkd \pi_V F\geq t$ if $V\notin W$. This is true for all $t< \pkd^s E$, so the conclusion follows from taking a countable sequence of $t$ increasing to $\pkd^s E$.

The derivation of part (iv) of Theorem \ref{mainC} from part (iii) is virtually identical, except at \eqref{wi} we take
$$ W_i\ = \ \big\{V\in G(n,m): \ubd \pi_V(F\cap {\overline U}_i)\ <\  \ubd^{m+\gamma}(F\cap {\overline U}_i)-\gamma\big\},$$
and note that $\hdd W_i  \leq m(n-m) - \gamma$ for each $i$.

Finally, $\gamma_{n,m}$-almost sure equality in Theorem \ref{mainA}(iv) again follows from part (iii) by the same argument, this time taking $W_i$ as in \eqref{wi} with $s=m$ and noting that $\gamma_{n,m} W_i=0$ so that $\gamma_{n,m} W=0$.
\hfill $\Box$
\medskip

\section{Inequalities}\label{ineqs}
\setcounter{equation}{0}
\setcounter{theo}{0}

A number of inequalities are satisfied by the dimension profiles; these were obtained for packing dimension profiles in\cite[Section 6]{FH2} but their derivation is more direct using our capacity approach. In particular inequality \eqref{ineq15} may be written in three equivalent ways which give different insights into the behaviour of the profiles.
\begin{prop}
Let $E\subset \mathbb{R}^n$ and let $d(s)$ denote any one of  $\lbd^s E, \ubd^s E$ or $\pkd^s E$. Then  for  $0< s\leq t$,
\be\label{ineq1}
0\ \leq\ d(s) \ \leq\ d(t)\  \leq \ n, 
\ee
and
\be\label{ineq15}
 \frac{d(t)}{1 +(1/s -1/t)d(t)}\ \leq\ d(s) \ \leq s.
\ee
If $d(s)>0$ then \eqref{ineq15} is equivalent  to
\be\label{ineq2}
0\ \leq \ \frac{1}{d(s)} - \frac{1}{s} \ \leq\  \frac{1}{d(t)} - \frac{1}{t},
\ee
giving the Lipschitz form
\be\label{ineq3}
d(t)-d(s) \ \leq\  \frac{d(s)d(t)}{st}(t-s)\ \leq\ t-s.
\ee
\end{prop}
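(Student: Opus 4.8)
The plan is to prove the chain of inequalities \eqref{ineq1}--\eqref{ineq3} by working directly with the capacity definition \eqref{dimpro}, since the key monotonicity and concavity-type behaviour of the profiles comes from elementary pointwise comparisons of the kernels $\phi_r^s$ for different values of $s$. I would treat the box-dimension profiles $\lbd^s E$ and $\ubd^s E$ first and then note that the packing profile $\pkd^s E$ inherits each inequality by taking infima over countable compact covers in \eqref{packpro}, since each bound is of the form ``$d(s) \leq$ (expression monotone in $d(t)$)'' and passes through the $\sup_i \ubd^s E_i$ construction unchanged.

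For \eqref{ineq1}, the bounds $0 \leq d(s)$ and $d(t) \leq n$ are immediate: the former from positivity of capacities, the latter from Corollary \ref{sgeqn} together with monotonicity (for $t \geq n$ the profile equals $\lbd E \leq n$). The monotonicity $d(s) \leq d(t)$ for $s \leq t$ follows because $\phi_r^s(x) \leq \phi_r^t(x)$ pointwise when $|x| \geq r$ and they agree when $|x| < r$, so for $r < 1$ we have $\phi_r^s \leq \phi_r^t$ everywhere; hence every energy with respect to $\phi_r^s$ is at most the corresponding $\phi_r^t$-energy, giving $C_r^s(E) \geq C_r^t(E)$ and thus the inequality on profiles after taking logarithmic limits. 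The upper bound $d(s) \leq s$ in \eqref{ineq15} is essentially the projection result: $\bdd^s E$ behaves like a dimension viewed from an $s$-dimensional viewpoint and cannot exceed $s$; concretely it follows by testing \eqref{minen} with a measure spread at scale $r$, or by comparison with the $s$-dimensional case via the covering bound.

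The substantive inequality is the lower bound in \eqref{ineq15}, equivalently the second inequality in \eqref{ineq2}. The natural route is an energy interpolation: for a measure $\mu$, I would compare the three quantities $\int\!\int \phi_r^s\, d\mu\, d\mu$, $\int\!\int \phi_r^t\, d\mu\, d\mu$, and the covering number, exploiting that for $s \leq t$ and fixed $r$ one has the pointwise relation $\phi_r^t(x) \leq \phi_r^s(x)^{\,?}$ only in a restricted sense, so instead I expect to split the energy integral dyadically as in the proof of Lemma \ref{potbound}. Writing the energy as a sum of contributions from annuli $|x-y| \sim 2^k r$, the ratio of the $\phi_r^t$ to $\phi_r^s$ weight on the $k$-th annulus is $2^{-k(t-s)}$, and summing a geometric series against the mass distribution yields a quantitative relation between $C_r^s$ and $C_r^t$ of the precise form needed. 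The main obstacle is to carry out this dyadic comparison so that the resulting exponent matches the algebraic expression $1/d(s) - 1/s \leq 1/d(t) - 1/t$ exactly rather than up to a loss; I expect this to require choosing the equilibrium measure from Lemma \ref{equilib} and optimising the annular splitting so that the ``harmonic'' combination of exponents emerges cleanly.

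Finally, the equivalence of \eqref{ineq15}, \eqref{ineq2} and \eqref{ineq3} is purely algebraic and I would dispatch it by direct manipulation. Assuming $d(s) > 0$, the left inequality of \eqref{ineq15} rearranges to $1/d(s) \leq 1/d(t) + (1/s - 1/t)$, which is exactly the right inequality of \eqref{ineq2}; the left inequality of \eqref{ineq2} is just $d(s) \leq s$ from \eqref{ineq15}. For \eqref{ineq3}, multiplying \eqref{ineq2} through by $d(s)d(t)$ gives $d(t) - d(s) \leq \frac{d(s)d(t)}{st}(t-s)$, and the second bound $\frac{d(s)d(t)}{st}(t-s) \leq t-s$ follows since $d(s) \leq s$ and $d(t) \leq t$ force $d(s)d(t)/(st) \leq 1$. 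Thus once the capacity comparison underlying \eqref{ineq15} is established, the remaining forms are immediate.
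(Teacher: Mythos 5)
Your overall architecture (prove everything for the box-dimension profiles, pass to $\pkd^s$ via the countable compact covers in its definition, and dispatch \eqref{ineq2} and \eqref{ineq3} by algebra) matches the paper, and your algebraic equivalences at the end are correct. But there are two directional slips and, more seriously, a genuine gap at the one substantive step. First, the slips: from \eqref{ker}, for $|x|\geq r$ one has $r/|x|\leq 1$ and hence $(r/|x|)^s\geq (r/|x|)^t$ when $s\leq t$, so the correct pointwise comparison is $\phi_r^s\geq\phi_r^t$, not $\phi_r^s\leq\phi_r^t$; as written, your chain yields $C_r^s(E)\geq C_r^t(E)$ and therefore $d(s)\geq d(t)$, the reverse of \eqref{ineq1}. (The correct inequality gives larger $s$-energies, hence $C_r^s(E)\leq C_r^t(E)$ and $d(s)\leq d(t)$.) Similarly, ``testing \eqref{minen} with a measure'' only bounds the infimum from above, i.e.\ bounds the capacity from \emph{below}, which is the wrong direction for $d(s)\leq s$; what is needed is a lower bound on the energy valid for the minimising measure, e.g.\ the paper's observation that the equilibrium potential satisfies $C_r^s(E)^{-1}=\int\phi_r^s(x-y)\,d\mu_0(y)\geq r^s\int_{|x-y|\geq r}|x-y|^{-s}\,d\mu_0(y)\gtrsim r^s$.

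The genuine gap is the left-hand inequality of \eqref{ineq15}. Your proposed dyadic comparison of $\phi_r^s$ and $\phi_r^t$ at the \emph{same} scale $r$ (annulus weights in ratio $2^{-k(t-s)}$) can only reproduce the monotonicity $C_r^s\leq C_r^t$; it cannot produce the harmonic relation $1/d(s)-1/s\leq 1/d(t)-1/t$, because that relation is intrinsically a statement comparing capacities at \emph{two different scales} related by a power law. You acknowledge this as ``the main obstacle'' but do not resolve it. The paper's resolution is: for $0<r<R$ split the potential as $\int\phi_r^s(x-y)\,d\mu(y)\leq\mu(B(x,R))+r^sR^{-s}\int_{|x-y|>R}(R/|x-y|)^s\,d\mu(y)$, apply H\"older's inequality with exponent $t/s$ to the tail to dominate it by $\bigl(\int\phi_R^t(x-y)\,d\mu(y)\bigr)^{s/t}$, take $\mu$ to be the equilibrium measure for $\phi_R^t$ from Lemma \ref{equilib}, and then choose $R=r^{1/(1+(1/s-1/t)d)}$ so that both terms are $O(r^{d/(1+(1/s-1/t)d)})$ whenever $C_R^t(E)\geq R^{-d}$. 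The H\"older interpolation together with the change of scale $r\mapsto R$ is the idea missing from your sketch, and without it the exponent in \eqref{ineq15} does not emerge.
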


\begin{proof}
First note that it is enough to prove \eqref{ineq1} and \eqref{ineq15} for $d(s)=\lbd^s E$ and $d(s)=\ubd^s E$. The analogues for  $d(s)=\pkd^s E$ then follow using the definition \eqref{packdef} of packing dimension profiles in terms of upper box dimension profiles. Note also that  \eqref{ineq2} and  \eqref{ineq3} come from simple rearrangements of  \eqref{ineq15}.

Inequality \eqref{ineq1} is immediate from the definitions since from  \eqref{ker} $\phi_r^s(x) \geq \phi_r^t(x)$ if $s\leq t$.
For the right-hand side of \eqref{ineq15} note that $C_r^s(E)^{-1} = \int\phi_r^s(x-y)d\mu_0(y) \geq r^s\int_{|x-y|\geq r}|x-y|^{-s}d\mu_0(y)$ for some $x\in E$, where  $\mu_0$ is an energy-minimising measure on $E$, and this last integral is bounded away from 0 for small $r$; taking lower or upper limits as $r\searrow 0$ gives the conclusion for box dimensions.

For the left-hand side of \eqref{ineq15}  let $0<r<R$, $0<s<t$ and $d>0$. Then for $\mu\in {\mathcal M}(E)$ and $x\in E$, splitting the integral and using H\"{o}lder's inequality,
\begin{eqnarray*}
\int \phi_r^s(x-y) d\mu(y) 
&\leq& \mu(B(x,R)) + \int_{|x-y|>R} \Big(\frac{r}{|x-y|}\Big)^sd\mu(y) \\
&=& \mu(B(x,R)) + r^sR^{-s}\int_{|x-y|>R} \Big(\frac{R}{|x-y|}\Big)^s d\mu(y) \\
&\leq & \mu(B(x,R)) +  r^sR^{-s}\bigg(\int_{|x-y|>R} \Big(\frac{R}{|x-y|}\Big)^t d\mu(y)\bigg)^{s/t} \\
&\leq & \int\phi_R^t(x-y) d\mu(y) +  r^sR^{-s}\bigg(\int\phi_R^t(x-y) d\mu(y)\bigg)^{s/t} \\
&\leq &R^{d} \bigg(R^{-d}\int\phi_R^t(x-y) d\mu(y)\bigg) +  r^sR^{s(d/t-1)}\bigg(R^{-d}\int\phi_R^t(x-y) d\mu(y)\bigg)^{s/t}
\end{eqnarray*}
Setting $R= r^{1/(1+(1/s -1/t)d)}$ this rearranges to 
$$r^{-d/(1+(1/s-1/t)d)} \int \phi_r^s(x-y) d\mu(y) \ \leq\ R^{-d}\int\phi_R^t(x-y) d\mu(y)
+\bigg(R^{-d}\int\phi_R^t(x-y) d\mu(y)\bigg)^{s/t}.$$

If $C_R^t (E) \geq R^{-d}$ for some $R$ then by Lemma \ref{equilib} there is a measure $\mu\in {\mathcal M}(E)$ such that the right-hand side of this inequality, and thus the left-hand side, is at most 2 for $\mu$-almost all $x$, so  
$C_r^s (E) \geq \frac{1}{2}r^{-d/(1+(1/s-1/t)d)}$ for the corresponding $r$. Letting $R\searrow 0$, it follows that
$$d(s)\ \geq\ \frac{d(t)}{1 +(1/s -1/t)d(t)},$$
where $d(\cdot)$ is either the lower or upper dimension profile, which rearranges to \eqref{ineq2}.
\end{proof}
\bigskip

Examples show that the inequalities \eqref{ineq2} give a complete characterisation of the dimension profiles that can be attained, see  \cite[Section 6]{FH2}. Setting $s=m$ and $t=n$ in inequalities \eqref{ineq15} gives \eqref{projin} along with similar inequalities for box dimensions, bounding the dimension profiles of $E$, and thus the typical dimensions of its projections, in terms of the dimension of $E$ itself.

\bibliographystyle{plain}

\bigskip
\end{document}